\documentclass[11pt,a4paper]{article}
\usepackage{bbm}

\usepackage[leqno]{amsmath}
\usepackage{dsfont}
\usepackage{amsfonts}
\usepackage{graphicx}
\usepackage{amsmath}
\usepackage{amssymb}
\usepackage{latexsym}
\usepackage{amsmath, amsfonts,amssymb, amsthm, euscript,makeidx,color,mathrsfs}
\usepackage{enumerate}

\usepackage[colorlinks,linkcolor=blue,anchorcolor=green,citecolor=red]{hyperref}


\oddsidemargin  = 0pt \evensidemargin = 0pt \marginparwidth = 1in
\marginparsep   = 0pt \leftmargin     = 1.25in \topmargin =0pt
\headheight     = 0pt \headsep        = 1.5em \topskip =0pt
\footskip       =0.35in \textheight   = 9.1in \textwidth =6.5in

\def\sqr#1#2{{\vcenter{\vbox{\hrule height.#2pt
              \hbox{\vrule width.#2pt height#1pt \kern#1pt \vrule width.#2pt}
              \hrule height.#2pt}}}}
%

%


\def\5n{\negthinspace \negthinspace \negthinspace \negthinspace \negthinspace }
\def\4n{\negthinspace \negthinspace \negthinspace \negthinspace }
\def\3n{\negthinspace \negthinspace \negthinspace }
\def\2n{\negthinspace \negthinspace }
\def\1n{\negthinspace }

\def\dbD{\mathbb{D}}
\def\dbE{\mathbb{E}}
\def\dbF{\mathbb{F}}

\def\dbH{\mathbb{H}}

\def\dbK{\mathbb{K}}
\def\dbL{\mathbb{L}}

\def\dbN{\mathbb{N}}

\def\dbR{\mathbb{R}}

\def\dbU{\mathbb{U}}
\def\dbV{\mathbb{V}}

\def\dbX{\mathbb{X}}

\def\dbZ{\mathbb{Z}}


\def\={\buildrel \triangle \over =}

\def\ds{\displaystyle}

\def\nb{\noalign{\bs}}

%
%
\def\a{\alpha}

\def\d{\delta}

\def\si{\sigma}
\def\t{\tau}

\def\th{\theta}

%
%
\def\G{\Gamma}
\def\D{\Delta}

\def\O{\Omega}
\def\mf{\mathcal{F}}
\def\me{\mathbb{E}}
\def\rd{\,\mathrm d}  
\def\bal{\begin{aligned}}
\def\eal{\end{aligned}}
\def\nb{\nabla}
%
%

\def\cF{{\cal F}}

\def\cJ{{\cal J}}

\def\cL{{\cal L}}

\def\cP{{\cal P}}

\def\cS{{\cal S}}
\def\cT{{\cal T}}

\def\cl{{\cal l}}
%
%

%

%
\def\no{\noindent}

\def\ms{\medskip}

\def\q{\quad}
\def\qq{\qquad}

%
%

\def\lt{\left}
\def\rt{\right}


\def\h{\widehat}
\def\wt{\widetilde}

\def\cd{\cdot}
\def\cds{\cdots}

\def\cl{\overline}

\def\({\Big (}
\def\){\Big )}
\def\[{\Big[}
\def\]{\Big]}

\def\bde{\begin{definition}\label}
\def\ede{\end{definition}}
\def\be{\begin{equation}}
\def\bel{\begin{equation}\label}
\def\ee{\end{equation}}
\def\beq{\begin{equation*}\begin{aligned}}
\def\eeq{\end{aligned}\end{equation*}}
\def\bt{\begin{theorem}\label}
\def\et{\end{theorem}}
\def\bc{\begin{corollary}\label}
\def\ec{\end{corollary}}
\def\bl{\begin{lemma}\label}
\def\el{\end{lemma}}
\def\bp{\begin{proposition}\label}
\def\ep{\end{proposition}}
\def\bas{\begin{assumption}\label}
\def\eas{\end{assumption}}
\def\br{\begin{remark}\label}
\def\er{\end{remark}}
\def\bex{\begin{example}\label}
\def\ex{\end{example}}
\def\ba{\begin{array}}
\def\ea{\end{array}}
\def\ed{\end{document}}

\def\square#1{\vbox{\hrule\hbox{\vrule height#1%
     \kern#1\vrule}\hrule}}
\def\rectangle#1#2{\vbox{\hrule\hbox{\vrule height#1%
     \kern#2\vrule}\hrule}}


\font\tenbb=msbm10 \font\sevenbb=msbm7 \font\fivebb=msbm5

\newfam\bbfam
\scriptscriptfont\bbfam=\fivebb \textfont\bbfam=\tenbb
\scriptfont\bbfam=\sevenbb

\newtheorem{theorem}{\hskip 1.3em Theorem}[section]
\newtheorem{definition}[theorem]{\hskip 1.3em Definition}
\newtheorem{proposition}[theorem]{\hskip 1.3em Proposition}
\newtheorem{corollary}[theorem]{\hskip 1.3em Corollary}
\newtheorem{lemma}[theorem]{\hskip 1.3em Lemma}
\newtheorem{remark}[theorem]{\hskip 1.3em Remark}
\newtheorem{example}[theorem]{\hskip 1.3em Example}
\newtheorem{algorithm}[theorem]{\hskip 1.3em Algorithm}

\newtheorem{assumption}[theorem]{\hskip 1.3em Assumption}

\makeatletter
   
   \@addtoreset{equation}{section}
\makeatother

\begin{document}

\title{Strong Rates of Convergence for Space-Time Discretization of the
Backward Stochastic Heat Equation, and of a
Linear-Quadratic Control Problem for the Stochastic Heat Equation\thanks{This work is supported in part by
the National Natural Science Foundation of China (11801467), and the Chongqing Natural Science Foundation
(cstc2018jcyjAX0148).}}

\author{Andreas Prohl\thanks{
Mathematisches Institut, Universit\"at T\"ubingen, Auf der Morgenstelle 10,
D-72076 T\"ubingen, Germany.
 {\small\it
e-mail:} {\small\tt prohl@na.uni-tuebingen.de}.} \quad 
and \quad Yanqing Wang\thanks{Corresponding author.
School of Mathematics and Statistics, Southwest University, Chongqing 400715, China.  {\small\it
e-mail:} {\small\tt yqwang@amss.ac.cn}. \ms}}

\date{January 18, 2020}
\maketitle

\begin{abstract}
We introduce a time-implicit, finite-element based space-time discretization scheme for the backward stochastic heat equation, 
and for the forward-backward stochastic heat equation from
stochastic optimal control, and prove  strong rates of convergence. The
fully discrete version of the forward-backward stochastic heat equation is then used within a gradient descent algorithm  
to approximately solve the linear-quadratic control problem for the stochastic heat equation driven 
by  additive noise.
\end{abstract}

\ms

\no\bf Keywords: \rm Strong error estimate, backward stochastic heat equation, stochastic linear quadratic problem, 


\ms

\no\bf AMS 2010 subject classification: \rm 49J20,
 65M60,
 93E20

\section{Introduction}

Let $D \subset {\mathbb R}^d$ be a bounded domain with $C^2$ boundary, $T>0$, and a  (deterministic) 
function $\widetilde{X} \equiv \{ \widetilde{X}(t);\,  t \in  [0,T]\}\in C([0,T];\dbH_0^1 \cap {\mathbb H}^2)$ be given. 
Our goal is to numerically approximate the ${\mathbb L}^2$-valued, ${\mathbb F}$-adapted control process 
$U^* \equiv \{ U^*(t);\, t \in [0,T]\}$ on the filtered probability space 
$(\Omega, {\mathcal F}, {\mathbb F}, {\mathbb P})$ that minimizes the functional ($\alpha \geq 0$)
\begin{equation} \label{w1003e2}
{\mathcal J}(X, U)=\frac 1 2 {\mathbb E} \Bigl[\int_0^T\lt( \Vert X(t) - \widetilde{X}(t) \Vert_{{\mathbb L}^2}^2+ \Vert U(t) \Vert^2_{{\mathbb L}^2}\rt)\, {\rm d}t
+ \alpha \Vert X(T) - \widetilde{X}(T) \Vert^2_{{\mathbb L}^2}\Bigr] 
\end{equation}
subject to the (controlled forward) stochastic heat equation ({\bf SPDE} for short)
\bel{w1013e1}
\lt\{
\begin{array}{ll}
{\rm d}X(t)=\bigl[\D X(t)+U(t)\bigr]\, {\rm d}t+\si(t) {\rm d}W(t) \q &\forall\, t \in [0,T]\,,\\
X(0)=X_0\,,
\end{array}
\rt.
\ee
which is supplemented by homogeneous Dirichlet boundary condition. Here
$W \equiv \{ W(t);\, t \in [0,T]\}$ is an ${\mathbb R}^m$-valued Wiener process, and $\sigma \equiv \{ \sigma(t);\, t \in [0,T]\} \in L^2_{\mathbb F}\bigl(\Omega; L^2(0,T; \dbH_0^1\cap \dbH^2)\bigr)$, which both are given on the same filtered probability space. For every given $U \in L^2_{\mathbb F}\bigl(\Omega; L^2(0,T; {\mathbb L}^2)\bigr)$, there exists a unique ${\mathbb H}^1_0$-valued strong (variational) solution $X \equiv X(U)$ in  \eqref{w1013e1} such that
$X(0) = X_0 \in L^2(\Omega; \dbH^1_0)$, and a 
unique minimizer $(X^*, U^*) \in L^2_{{\mathbb F}}\bigl( \Omega; C([0,T]; {\mathbb L}^2) \cap L^2(0,T; {\mathbb H}^1_0) \times L^2(0,T; {\mathbb L}^2)\bigr)$ of the stochastic optimal control problem: 'minimize \eqref{w1003e2} subject to \eqref{w1013e1}' -- which we below refer to as {\bf SLQ}; see e.g.~\cite{Bensoussan83}.

We consider problem {\bf SLQ} as a prototype example of a (linear-quadratic) stochastic optimal control problem involving a stochastic PDE, for which  corresponding numerical analyses so far are rare in the existing literature; see e.g.~\cite{Wang16,Dunst-Prohl16}. This is in contrast to the deterministic counterpart problem {\bf LQ} which involves a linear PDE,
where optimal rates of {convergence} are available for (finite element based) space-time discretization of related optimality conditions (see e.g.~\cite{Mcknight-Bosarge73,Malanowski82,Rosch04,Meidner-Vexler08,Gong-Hinze13}), which may then be used as part of a gradient descent algorithm with step size control 
\cite{Hinze-Pinnau-Ulbrich-Ulbrich09}
to approximate
the minimizing tuple $(X^*, U^*)$, which here consists of deterministic state and control functions. If compared to problem {\bf LQ}, problem {\bf SLQ} owns some distinctive characters and additional difficulties caused by the driving
Wiener process in the SPDE \eqref{w1013e1}, which make the generalization of the
numerical results for the deterministic control problem to {\bf SLQ} a non-trivial task.
For example, a crucial difficulty consists in solving the adjoint equation in the context of {\bf SLQ}, which here is  a backward stochastic PDE ({\bf BSPDE} for short) of the form
\bel{bshe1}
\lt\{
\begin{array}{ll}
{\rm d}Y(t)= \Bigl[-\D Y(t)+ \bigl[X(t) - \widetilde{X}(t)\bigr] \Bigr]{\rm d}t+Z(t) {\rm d}W(t)  \q & \forall\, t \in [0,T]\, ,\\
Y(T)=- \alpha \lt(X(T)-\wt X(T)\rt)\, ,
\end{array}
\rt.
\ee
having a solution tuple $(Y,Z) \in L^2_{{\mathbb F}}\bigl( \Omega;  C([0,T]; {\mathbb H}^1_0) \cap L^2(0,T; {\mathbb H}^1_0 \cap {\mathbb H}^2)\bigr) \times
L^2_{{\mathbb F}}\bigl( \Omega; L^2(0,T; {\mathbb H}^1_0)\bigr)$; cf.~\cite{Du-Tang12}.
The adjoint variable $Y$ is then related to the optimal control
by Pontryagin's maximum principle, which in the case of problem {\bf SLQ} is
\begin{equation}\label{pontr1} 0 = U^*(t) - Y(t) \qquad \forall\, t \in (0,T)\, .
\end{equation}
The combination of equations \eqref{w1013e1}, \eqref{bshe1}, and \eqref{pontr1} then uniquely determines the optimal process tuple $(X^*,U^*)$ of problem {\bf SLQ}.

The convergence analysis of space-time discretization
{\bf BSPDE}s is a recent research subject, and available
results  are rare as well.
A first error analysis for an (abstract) time-space discretization based on the implicit Euler method for the 
above {\bf BSPDE} \eqref{bshe1} is \cite{Wang16}, where the
error depends on the ratio of time discretization and Galerkin parameters. That work
heavily draws conclusions with the help of Malliavin calculus, and
the (space-)time regularity of solutions to the underlying {\bf BS(P)DE}. In  \cite{Dunst-Prohl16}, 
the authors derive rates of convergence for a conforming finite element semi-discretization, and discuss its actual implementation.
The proofs in \cite{Dunst-Prohl16} use simple variational arguments, resting on improved regularity properties of the variational solution, It\^{o}'s formula, and approximation results for the finite element method. However, the 
interplay of spatial and temporal discretization errors is left open in \cite{Dunst-Prohl16}, in particular the relevant question regarding unconditional convergence rates, which allow discretization parameters w.r.t.~time and space to independently
tend to zero, and general regular space-time meshes.

We address this issue in Section \ref{error_bspde} as our first goal in this work: its derivation requires to study the
time regularity of the solution $(Y,Z)$ to {\bf BSPDE} \eqref{bshe} in particular,  which seems not available in the literature so far, and uses Malliavin calculus for that matter for the solution component $Z$, in particular. For this purpose, we borrow related arguments from
\cite{ZhangJF04,Hu-Nualart-Song11} where BSDEs are studied, and variational arguments.

The second goal in this work is addressed in Section \ref{rate}, where strong error estimates for a space-time discretization of the coupled forward-backward SPDE
\eqref{w1013e1}-\eqref{bshe1} ({\bf FBSPDE} for short) are shown. These results extend available ones 
(cf.~\cite{Dunst-Prohl16}) in the literature in several aspects: the obtained strong convergence rates for the used 
finite element based {\em space-time} discretization \eqref{w1212e3} holds for {\em arbitrary} times $T$ -- and is 
not only a semi-discretization in space where
optimal rates are obtained in \cite{Dunst-Prohl16} for {\em small times $T$} via a contraction argument. The numerical analysis uses variational arguments to first bound the error in the optimal controls, and here exploits the unique solvability of 
(the discretization of) problem {\bf SLQ}, as well as the related sufficient and necessary optimality conditions; in a second step, error bounds for the optimal state, and its adjoint are based on stability properties of the state equation, and the adjoint equation available from Section \ref{error_bspde}.

To solve a {\bf BSPDE} computationally requires huge computational resources (see \cite{Dunst-Prohl16}), and it is even more computationally demanding (in terms of computational storage requirements and computational times) to solve the coupled  {\bf FBSPDE}. 
Consequently, an alternative numerical strategy to
the space-time discretization of {\bf FBSPDE} is useful to make accurate computations feasible, which decouples the computation of (approximating iterates of) the solution parts from a {\bf SPDE} from that of a {\bf BSPDE} per iteration.
A simple fixed-point method on the level of optimality conditions to accomplish this goal is known to converge only for small times $T>0$ (cf.~\cite{Dunst-Prohl16}); instead,
we may again return to the fully discretized problem {\bf SLQ}$_{h\t}$
\eqref{w1003e8}--\eqref{w1003e7}
 and exploit its character as a minimization problem to initiate a gradient descent method to
successively determine approximations of the optimal control; this method is detailed in
Section \ref{numopt}, and a convergence order is shown for this iteration which is the final goal in this work.

The rest of this paper is organized as follows. In Section \ref{pre}, we introduce  notations, and review relevant properties of the problems {\bf BSPDE} \eqref{bshe} and {\bf FBSPDE} considered in this work. In Section \ref{error_bspde}, we prove  strong error estimates
for a space-time discretization of {\bf BSPDE}. By virtue of the obtained
 error  estimates, in Section \ref{rate}, we prove a convergence rate for a space-time discretization of {\bf FBSPDE}, which is related to problem {\bf SLQ}. Convergence of the related iterative gradient descent method towards the minimizer $U^*$ of  {\bf SLQ} is shown in Section \ref{numopt}.

\section{Preliminaries}\label{pre}

\subsection{Notation --- involved processes and the finite element method}\label{not1}

Let $\bigl({\mathbb K}, ( \cdot, \cdot)_{{\mathbb K}}\bigr)$ be a separable Hilbert space. 
By $\Vert \cdot \Vert_{{\mathbb L}^2}$ resp.~$(\cdot, \cdot)_{\dbL^2}$, 
we denote the norm resp.~the scalar product in ${\mathbb L}^2 := {\mathbb L}^2(D)$.
The norm in ${\mathbb H}^1_0:=H_0^1(D)$, $\dbH^2:=H^2(D)$ is denoted by $\| \cdot \|_{\dbH_0^1}$, $\|\cd\|_{\dbH^2}$
respectively. Let $(\Omega, {\mathcal F}, {\mathbb F}, {\mathbb P})$ be a complete filtered probability space, where 
${\mathbb F}$ is the filtration generated by the ${\mathbb R}^m$-valued Wiener process $W$, which is augmented by 
all the ${\mathbb P}$-null sets. Below, we set $m=1$ for simplicity. The space of all ${\mathbb F}$-adapted processes 
$X: \Omega \times [0,T] \rightarrow {\mathbb K}$ satisfying 
${\mathbb E}[\int_0^T \Vert X(t)\Vert^2_{{\mathbb K}}\, {\rm d}t] < \infty$ is denoted by 
$L^2_\dbF(\O;L^2(0,T; {\mathbb K}))$; the space of all ${\mathbb F}$-adapted processes 
$X: \Omega \times [0,T] \rightarrow {\mathbb K}$ satisfying 
${\mathbb E}[\sup_{t \in [0,T]} \Vert X(t)\Vert^2_{\mathbb K}] < \infty$ is denoted by
$L^2_{{\mathbb F}}\bigl(\Omega; C([0,T]; {\mathbb K})\bigr)$.

We partition the bounded domain $D \subset {\mathbb R}^d$ via a regular triangulation ${\mathcal T}_h$ into elements $K$ with maximum mesh size
$h := \max \{ {\rm diam}(K):\, K \in {\mathcal T}_h\}$, and consider spaces
$${\mathbb V}_h^1 :=  \{\phi \in {\mathbb H}^1_0:\, \phi \bigl\vert_K \in {\mathbb P}_1(K) \quad \forall\, K \in {\mathcal T}_h \}\,, \qquad 
{\mathbb V}_h^0 := \{\phi \in {\mathbb L}^2:\, \phi \bigl\vert_K \in {\mathbb P}_0(K) \quad \forall\, K \in {\mathcal T}_h \}\, ,$$
where ${\mathbb P}_i(K)$ denotes the space of {polynomials} of degree $i \,(i=0,1)$.
{The ${\mathbb L}^2$-projection $\Pi_h^i: {\mathbb L}^2 \rightarrow {\mathbb V}^i_h$ is defined by $(\Pi^{i}_h \xi - \xi, \phi_h)_{{\mathbb L}^2}= 0$ for all $\phi_h \in {\mathbb V}_h^i$.
We define the discrete Laplacean $\Delta_h: {\mathbb V}_h^1 \rightarrow {\mathbb V}_h^1$ by $(-\Delta_h \xi_h, \phi_h)_{{\mathbb L}^2} = (\nabla \xi_h, \nabla \phi_h)_{{\mathbb L}^2}$ for all $\xi_h, \phi_h \in {\mathbb V}_h^1$. 

We use approximation estimates for the projection $\Pi_h^1$, and an inverse estimate (cf.~\cite{Brenner-Scott08}) to conclude that
\begin{equation}\label{estim}\Vert \Delta_h \Pi_h^1 \xi\Vert_{{\mathbb L}^2} \leq C \Vert \nabla^2 \xi\Vert_{{\mathbb L}^2} \qquad \forall\, \xi \in {\mathbb H}^1_0 \cap {\mathbb H}^2\, ,
\end{equation} since
\begin{eqnarray*}
\Vert \Delta_h \Pi_h^1 \xi\Vert_{{\mathbb L}^2}^2 &=& -\bigl( \nabla [\Pi_h^1\xi - \xi], \nabla \Delta_h \Pi_h^1 \xi\bigr)_{{\mathbb L}^2} - (\nabla \xi, \nabla \Delta_h \Pi_h^1 \xi)_{{\mathbb L}^2}\\
&\leq& Ch \Vert \nabla^2 \xi\Vert_{{\mathbb L}^2} \Vert \nabla \Delta_h \Pi_h^1 \xi \Vert_{{\mathbb L}^2} + (\Delta \xi, \Delta_h \Pi_h^1 \xi)_{{\mathbb L}^2}\\
&\leq& C \bigl(\Vert \nabla^2 \xi\Vert_{{\mathbb L}^2}  + \Vert \Delta \xi\Vert_{{\mathbb L}^2} \bigr)\Vert \Delta_h \Pi_h^1 \xi\Vert_{{\mathbb L}^2}\, .
\end{eqnarray*}

\ms

We denote by $I_\tau = \{ t_{n}\}_{n=0}^N \subset [0,T]$ a time mesh with maximum step size $\tau :=\max\{t_{n+1}-t_n:\,n=0,1,\cds,N-1\}$, and
$\D_nW=W(t_n)-W(t_{n-1})$ for all $n=1,\cds,N$. 
For simplicity, we choose a uniform partition, i.e. $\t=T/N$ and $\t\leq 1$. 
The results in this work still hold for quasi-uniform partitions. 
%

\subsection{The stochastic heat equation ---  strong convergence rates for a space-time discretization}\label{opt-1}

For a given $U\in L^2_{\mathbb F} \bigl(\Omega; L^2(0,T; \dbL^2) \bigr)$,  and 
$X_0 \in \dbH^1_0$ in \eqref{w1013e1}, there exist a strong solution 
$X \in L^2_{{\mathbb F}}\bigl( \Omega; C([0,T]; {\mathbb H}^1_0) \cap L^2(0,T; {\mathbb H}^1_0 \cap {\mathbb H}^2)\bigr)$ 
solving ${\mathbb P}$-a.s.~for all $t \in [0,T]$
\begin{equation}\label{forw1}
\bal
&\bigl( X(t), \phi\bigr)_{\dbL^2} - (X_0, \phi)_{\dbL^2} + \int_0^t \lt[\bigl(\nabla X(s), \nabla \phi\bigr)_{\dbL^2} - \bigl(U(s), \phi\bigr)_{\dbL^2}\rt]\, {\rm d}s\\
&\qq\qq= \int_0^t \bigl( \sigma(s), \phi \bigr)_{\dbL^2}\, {\rm d}W(s)
\qquad \forall\, \phi \in {\mathbb H}^1_0\,,
\eal
\end{equation}
and a constant $C \equiv C(D,T)>0$ such that
\begin{equation}\label{stochheat1} 
{\mathbb E}\lt[ \sup_{t \in [0,T]} \Vert X(t)\Vert_{{\mathbb H}_0^1}^2 +
\int_0^T \Vert X(t)\Vert^2_{{\mathbb H}^2}\, {\rm d}t\rt] \leq C {\mathbb E}\lt[\Vert X_0\Vert^2_{{\mathbb H}_0^1} + \int_0^T\Vert U(t)\Vert^2_{{\mathbb L}^2}\, {\rm d}t
\rt]\, .
\end{equation}
A finite element discretization of \eqref{forw1} then reads: For all $t \in [0,T]$, find
$X_h \in L^2_{{\mathbb F}}\bigl(\Omega; C([0,T]; {\mathbb V}_h^1)\bigr)$ such that ${\mathbb P}$-a.s.~and for all times $t \in [0,T]$
\bel{spdedisch1}
\bal
&\bigl(X_h(t), \phi_h\bigr)_{\dbL^2} - (X_0, \phi_h)_{\dbL^2} + \int_0^t \bigl(\nabla X_h(s), \nabla \phi_h\bigr)_{\dbL^2} - \bigl( U(s), \phi_h\bigr)_{\dbL^2}\, {\rm d}s \\ 
&\qquad \qq = \int_0^t \bigl( \sigma(s), \phi_h\bigr)_{\dbL^2}\, {\rm d}W(s) \qquad \forall\, \phi_h \in {\mathbb V}_h^1 \, .
\eal
\ee
Equation \eqref{spdedisch1} may be recast into the following SDE system,
\bel{sde}
\lt\{
\bal
&dX_h(t)=\bigl[\Delta_hX_h(t)+\Pi_h^1U(t)\bigr] {\rm d}t+ \Pi_h^1\si(t) {\rm d}W(t)\,, \\
&X_h(0)= \Pi_h^1X_{0}\, .
\eal
\rt.
\ee

The derivation of a strong error estimate is standard, and uses the improved (spatial) regularity properties of the strong variational solution,
\begin{equation}\label{esti-space1}
\sup_{t \in [0,T]} {\mathbb E}\bigl[ \Vert X_h(t) - X(t)\Vert^2_{{\mathbb L}^2}\bigr]
+ {\mathbb E}\Bigl[ \int_0^T \bigl\Vert \nabla \bigl[ X_h(t) - X(t)\bigr] \bigr\Vert^2_{{\mathbb L}^2}\, {\rm d}t\Bigr]
\leq Ch^2\, .
\end{equation}
We now consider a time-implicit discretization of \eqref{spdedisch1} on a partition $I_\tau$ of $[0,T]$. The problem then reads: For every $0 \leq n \leq N-1$, find a solution $X^{n+1}_h \in L^2_{{\mathcal F}_{t_{n+1}}}(\Omega; {\mathbb V}_h^1)$ such that ${\mathbb P}$-a.s.
\begin{equation}\label{esti-time1} 
(X^{n+1}_h - X^{n}_h, \phi_h)_{\dbL^2} + \tau \Bigl[(\nabla X_h^{n+1},\nabla \phi_h)_{\dbL^2} - 
(U(t_n), \phi_h)_{\dbL^2}\Bigr] = \bigl( \sigma(t_n), \phi_h\bigr)_{\dbL^2} \Delta_{n+1} W\, ,
\end{equation}
where $\D_{n+1}W:=W(t_{n+1})-W(t_n)$.
The verification of the error estimate (see \cite{Yan05})
\begin{equation}\label{euler1}
\max_{0 \leq n \leq N} {\mathbb E}\bigl[ \Vert X_h(t_n) - X^n_h\Vert^2_{{\mathbb L}^2}\bigr] + \tau \sum_{n=1}^N  {\mathbb E}\Bigl[ \bigl\Vert \nabla \bigl[ X_h(t_n) - X^n_h\bigr] \bigr\Vert^2_{{\mathbb L}^2}\Bigr]\leq C\tau
\end{equation}
rests on stability properties of the implicit Euler, as well as the bound
\bel{w1208e2}
\sum_{n=0}^{N-1}\int_{t_n}^{t_{n+1}}\me\bigl[\|X_h(t)-X_h(t_n)\|_{\dbH_0^1}^2\bigr]\rd t \leq C \t\, ,
\ee
which requires additional regularity properties of involved data, i.e.,
$X_0 \in  \dbH^1_0 \cap \dbH^2$, as well as 
$\si\in L^2_\dbF(\O;L^2(0,T;\dbH_0^1\cap \dbH^2))$, $U\in L^2_\dbF(\O;L^2(0,T;\dbH_0^1))$,
such that 
\beq
\sum_{n=0}^{N-1}\int_{t_n}^{t_{n+1}}\me \lt[\|U(t)-U(t_n)\|_{\dbL^2}^2+\|\si(t)-\si(t_n)\|_{\dbL^2}^2\rt]\rd t \leq C\t\, ,
\eeq
and the ${\mathbb H}^1$-stability of the ${\mathbb L}^2$-projection $\Pi_h^1$; cf.~\cite{Crouzeix-Thomee87,Bramble-Pasciak-Steinbach02}. 

\subsection{The backward stochastic heat equation --- a finite element based spatial discretization}\label{opt-2}
Let $Y_T \in L^2_{{\mathcal F}_T}\bigl( \Omega; {\mathbb H}^1_0\bigr)$ and $f \in L^2_{{\mathbb F}} \bigl(\Omega; L^2(0,T; {\mathbb L}^2)\bigr)$. A strong solution to the backward stochastic heat equation
\bel{bshe}
\lt\{
\begin{array}{ll}
{\rm d}Y(t)= \bigl[-\D Y(t)+f(t) \bigr] {\rm d}t+Z(t){\rm d}W(t)  & \forall\, t \in [0,T] \\
Y(T)=Y_T  
\end{array}
\rt.
\ee
is a pair of square integrable ${\mathbb F}$-adapted processes 
$$(Y,Z) \in L^2_{{\mathbb F}}\Bigl(\Omega; C([0,T]; {\mathbb H}^1_0) \cap L^2(0,T; {\mathbb H}^1_0 \cap {\mathbb H}^2\bigr) \times L^2(0,T; {\mathbb H}^1_0)\Bigr)$$
 such that ${\mathbb P}$-a.s.~for all times $t \in [0,T]$
 \begin{equation}\label{vari-1}
 \bal
&\bigl( Y_T, \phi\bigr)_{\dbL^2} - (Y(t),\phi)_{\dbL^2} - \int_t^T\lt[ \bigl(\nabla Y(s), \nabla \phi \bigr)_{\dbL^2} +  \bigl( f(s), \phi\bigr)_{\dbL^2}\rt]\, {\rm d}s \\
&\qq\qq= \int_t^T \bigl( Z(s), \phi\bigr)_{\dbL^2}\, {\rm d}W(s)
 \quad \forall\, \phi \in {\mathbb H}^1_0\,,
 \eal
 \end{equation}
 and there exists a constant $C \equiv C(D,T) > 0$ such that
 \begin{equation}\label{vari-2}
 {\mathbb E}\bigl[ \sup_{t \in [0,T]} \Vert Y(t)\Vert^2_{\dbH_0^1}\bigr]
 + {\mathbb E}\Bigl[ \int_0^T \Vert Y(t)\Vert^2_{{\mathbb H}^2} + 
 \Vert Z(t) \Vert^2_{\dbH_0^1}\, {\rm d}t\Bigr] \leq
 C {\mathbb E}\Bigl[ \Vert Y_T\Vert^2_{\dbH_0^1} + \int_0^T \Vert f(t) \Vert^2_{{\mathbb L}^2}\, {\rm d}t\Bigr]\, .
 \end{equation}
 The existence of a strong  solution to \eqref{bshe} in the above sense, as well as its uniqueness are shown in \cite{Du-Tang12}.
 
We now consider a finite element discretization  of the {\bf BSPDE} \eqref{bshe}. Let $Y_{T,h} \in L^2_{{\mathcal F}_T}(\Omega; {\mathbb V}_{h}^{1})$ 
be an approximation of $Y_T$. The problem {\bf BSPDE}$_h$ then reads: Find $(Y_h, Z_h) \in L^2_{{\mathbb F}}\bigl( \Omega; C([0,T]; {\mathbb V}_h^1)\bigr) \times L^2_{{\mathbb F}}\bigl( \Omega; L^2(0,T; {\mathbb V}_h^1)\bigr)$ such that ${\mathbb P}$-a.s.~for all $t \in [0,T]$ 
\begin{eqnarray}\label{vf2}
&&(Y_{T,h},\phi_h)_{\dbL^2} - \bigl( Y_h(t), \phi_h\bigr)_{\dbL^2}  - \int_t^T \bigl(\nabla Y_h(s), \nabla \phi_h \bigr)_{\dbL^2} +  \bigl( f(s), \phi_h\bigr)_{\dbL^2}\, {\rm d}s \\ \nonumber
 &&\qquad \qquad = \int_t^T \bigl( Z_h(s), \phi_h\bigr)_{\dbL^2}\, {\rm d}W(s)
 \qquad \forall\, \phi_h \in {\mathbb V}^1_h\,.
\end{eqnarray}
%
%
%
Equation \eqref{vf2} is equivalent to the following system of BSDEs:
\bel{bsde}
\lt\{
\bal
&{\rm d}Y_h(t)=\bigl[-\Delta_hY_h(t)+ \Pi_h^1 f(t)\bigr] {\rm d}t+ Z_h(t){\rm d}W(t) \quad \forall\, t\in [0,T]\\
&Y_h(T)=Y_{T,h}\, .
\eal
\rt.
\ee
The existence and uniqueness of a solution tuple $(Y_h, Z_h)$ e.g.~follows from \cite[Theorem 2.1]{ElKaroui-Peng-Quenez97}. Moreover, there exists $C \equiv C( f,T) >0$ such that
\begin{equation}\label{bspde19} 
\bal
&\sup_{t \in [0,T]} {\mathbb E}\bigl[ \Vert \nabla Y_h(t)\Vert^2_{{\mathbb L}^2}\bigr] +
{\mathbb E}\Bigl[ \int_0^T \Vert \Delta_h Y_h(t)\Vert^2_{{\mathbb L}^2} +
\Vert \nabla Z_h(t)\Vert^2_{{\mathbb L}^2}\, {\rm d}t\Bigr] \\
&\qq\leq
C {\mathbb E}\bigl[ \lt\| \nabla Y_{T,h}\rt\|^2_{{\mathbb L}^2}+\int_0^T\|f(t)\|^2_{\dbL^2}\rd t\bigr]\, ;
\eal
\end{equation}
cf.~\cite[Lemma 3.1]{Dunst-Prohl16}. --- The  following result is taken from \cite[Theorem 3.2]{Dunst-Prohl16}, whose proof exploits  the bounds \eqref{vari-2}.
\begin{theorem}\label{w0911t1}
Let $Y_T \in L^2_{{\mathcal F}_T}(\Omega; {\mathbb H}^1_0)$, $Y_{T,h} \in L^2_{{\mathcal F}_T}(\Omega; {\mathbb V}^1_h)$. Let $(Y,Z)$ be the solution to \eqref{vari-1}, and $(Y_h, Z_h)$ solve \eqref{vf2}. There exists
$C \equiv C(Y_T, f,T) >0$ such that
\begin{eqnarray*}
&&\sup_{t \in [0,T]} \me \bigl[\|Y(t)-Y_h(t)\|^2_{{\mathbb L}^2}\bigr] +\me \Bigl[\int_0^T\|\nabla\bigl[ Y(t)-Y_h(t)\bigr]\|^2_{{\mathbb L}^2}+\|Z(t)-Z_h(t)\|^2_{{\mathbb L}^2}\, {\rm d}t\Bigr]\\
&&\qquad \leq C \bigl(\me\bigl[\|Y_T - Y_{T,h}\|^2_{{\mathbb L}^2} \bigr] +h^2\bigr)\, .
\end{eqnarray*}
\end{theorem}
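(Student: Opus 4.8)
The plan is to prove the energy-norm estimate by a projection splitting combined with It\^o's formula, exploiting the \emph{backward} (dissipative) structure of \eqref{vf2} so that the elliptic term appears with a favourable sign. Throughout I write $\eta(t):=Y(t)-\Pi_h^1 Y(t)$ for the projection error, $\theta(t):=\Pi_h^1 Y(t)-Y_h(t)\in{\mathbb V}^1_h$ for the discrete error, and $\zeta(t):=\Pi_h^1 Z(t)-Z_h(t)\in{\mathbb V}^1_h$, so that $Y-Y_h=\eta+\theta$ and $Z-Z_h=(Z-\Pi_h^1 Z)+\zeta$. Since $\Pi_h^1$ is a bounded linear map, $\Pi_h^1 Y$ is again an It\^o process, and subtracting \eqref{bsde} from its projected dynamics gives the finite-dimensional backward SDE
\begin{equation*}
\rd\theta(t)=\bigl[-\Delta_h\theta(t)+w(t)\bigr]\,\rd t+\zeta(t)\,\rd W(t)\,,\qquad \theta(T)=\Pi_h^1 Y_T-Y_{T,h}\,,
\end{equation*}
where $w:=\Delta_h\Pi_h^1 Y-\Pi_h^1\Delta Y\in{\mathbb V}^1_h$, and the martingale integrand is identified as exactly $\zeta=\Pi_h^1 Z-Z_h$ by matching the $\rd W$ parts of $\Pi_h^1\rd Y$ and $\rd Y_h$. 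The key algebraic observation, obtained by integration by parts and the definition of $\Delta_h$, is that $(w,\phi_h)_{\dbL^2}=(\nabla\eta,\nabla\phi_h)_{\dbL^2}$ for every $\phi_h\in{\mathbb V}^1_h$; in particular $(w,\theta)_{\dbL^2}=(\nabla\eta,\nabla\theta)_{\dbL^2}$, so the consistency term is never estimated in $\dbL^2$ on its own.

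Next I would apply It\^o's formula to $\|\theta(t)\|^2_{\dbL^2}$, integrate from $t$ to $T$, and take expectations; the stochastic integral is a genuine martingale thanks to the a priori bound \eqref{bspde19}, hence drops out. Because the equation runs backward, the elliptic contribution $(\theta,-\Delta_h\theta)_{\dbL^2}=\|\nabla\theta\|^2_{\dbL^2}$ and the It\^o correction $\|\zeta\|^2_{\dbL^2}$ both land on the left-hand side with the correct sign, giving
\begin{equation*}
\me\|\theta(t)\|^2_{\dbL^2}+2\,\me\!\int_t^T\!\|\nabla\theta\|^2_{\dbL^2}\,\rd s+\me\!\int_t^T\!\|\zeta\|^2_{\dbL^2}\,\rd s=\me\|\theta(T)\|^2_{\dbL^2}-2\,\me\!\int_t^T\!(\nabla\eta,\nabla\theta)_{\dbL^2}\,\rd s\,.
\end{equation*}
Estimating the cross term by Young's inequality, $2|(\nabla\eta,\nabla\theta)_{\dbL^2}|\le\|\nabla\theta\|^2_{\dbL^2}+\|\nabla\eta\|^2_{\dbL^2}$, and absorbing $\me\int_t^T\|\nabla\theta\|^2_{\dbL^2}\,\rd s$ into the left-hand side leaves
\begin{equation*}
\sup_{t\in[0,T]}\me\|\theta(t)\|^2_{\dbL^2}+\me\!\int_0^T\!\|\nabla\theta\|^2_{\dbL^2}\,\rd s+\me\!\int_0^T\!\|\zeta\|^2_{\dbL^2}\,\rd s\le \me\|\theta(T)\|^2_{\dbL^2}+\me\!\int_0^T\!\|\nabla\eta\|^2_{\dbL^2}\,\rd s\,.
\end{equation*}
Notably no Gronwall argument is required, since no undifferentiated $\|\theta\|^2_{\dbL^2}$ survives on the right.

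It then remains to bound the two data-dependent terms by approximation theory. For the terminal term I split $\theta(T)=(\Pi_h^1 Y_T-Y_T)+(Y_T-Y_{T,h})$ and use $\|Y_T-\Pi_h^1 Y_T\|_{\dbL^2}\le Ch\|Y_T\|_{\dbH^1_0}$, while for the consistency term the $H^1$-stability of $\Pi_h^1$ yields $\|\nabla\eta\|_{\dbL^2}\le Ch\|Y\|_{\dbH^2}$; the regularity bound \eqref{vari-2} then converts $\me\int_0^T\|Y\|^2_{\dbH^2}\,\rd s$ into $C(Y_T,f,T)$. This gives $\sup_t\me\|\theta\|^2_{\dbL^2}+\me\int_0^T(\|\nabla\theta\|^2_{\dbL^2}+\|\zeta\|^2_{\dbL^2})\,\rd s\le C(\me\|Y_T-Y_{T,h}\|^2_{\dbL^2}+h^2)$. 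Finally I recombine via the triangle inequality, using $\sup_t\me\|\eta\|^2_{\dbL^2}\le Ch^2\sup_t\me\|Y\|^2_{\dbH^1_0}$, $\me\int_0^T\|\nabla\eta\|^2_{\dbL^2}\,\rd s\le Ch^2\me\int_0^T\|Y\|^2_{\dbH^2}\,\rd s$, and $\me\int_0^T\|Z-\Pi_h^1 Z\|^2_{\dbL^2}\,\rd s\le Ch^2\me\int_0^T\|Z\|^2_{\dbH^1_0}\,\rd s$, each finite through \eqref{vari-2}, to deduce the claimed bound for $Y-Y_h=\eta+\theta$ and $Z-Z_h=(Z-\Pi_h^1 Z)+\zeta$.

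The only genuinely delicate point is the treatment of the consistency term $w$: bounding it directly in $\dbL^2$ is too lossy (an inverse estimate only gives $O(1)$), and the whole argument hinges on re-expressing it as $(\nabla\eta,\nabla\theta)_{\dbL^2}$ and absorbing it against the good term $\|\nabla\theta\|^2_{\dbL^2}$ supplied by the backward structure. The companion bookkeeping step that must be done with care is the clean identification of the martingale integrand as $\zeta=\Pi_h^1 Z-Z_h$, so that $Z$ enters the final estimate only through its own projection error plus $\zeta$; once these two observations are in place the remaining estimates are routine, and, consistently with the $h^2$ on the right-hand side, one obtains only the energy-norm (order-$h$) rate, so no duality (Nitsche) argument is needed.
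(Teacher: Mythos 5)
Your argument is correct and is in substance the proof the paper relies on: the statement is only cited here from \cite{Dunst-Prohl16}, whose proof is exactly this projection splitting $Y-Y_h=\eta+\theta$ combined with It\^o's formula for $\|\theta\|^2_{\dbL^2}$, the sign-favourable backward structure, and the regularity bound \eqref{vari-2}. The two points you single out as delicate --- rewriting the consistency term as $(\nabla\eta,\nabla\theta)_{\dbL^2}$ so it can be absorbed into $\|\nabla\theta\|^2_{\dbL^2}$, and identifying the martingale integrand of $\Pi_h^1 Y$ as $\Pi_h^1 Z$ so that $Z-Z_h$ splits into a projection error plus $\zeta$ --- are both handled correctly, with the $H^1$-stability of $\Pi_h^1$ that the paper already assumes.
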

Choosing $Y_{T,h} = \Pi_h^1 Y_T$ thus leads to an error estimate for the spatial semi-discretization \eqref{bsde}.

\ms
\subsection{Temporal discretization of the backward stochastic heat equation --- the role of the Malliavin derivative}\label{temp-mall}

The numerical analysis of a temporal discretization of \eqref{bsde} requires Malliavin calculus to bound temporal increments $\me\Vert Z_h(t) - Z_h(s)\Vert_{{\mathbb L}^2}$ in terms of $\vert t-s \vert$, where $s,t \in [0,T]$. We therefore recall the definition of the Malliavin derivative
of processes, and the crucial connection between the Malliavin derivative of $Y_h$ and $Z_h$ from \eqref{bsde}. For further details, we refer to \cite{Nualart06,ElKaroui-Peng-Quenez97}. 

Let us recall that $\cF_{T}=\sigma\{W(t);0\leq t\leq T\}$, and that ${\mathbb K}$ denotes a separable Hilbert space. We define the It\^o isometry $W:L^2(0,T;{\mathbb R})\to L^2_{\cF_{T}}(\Omega;{\mathbb R})$ by
\beq
W(h)=\int_0^{T}h(t) \, {\mathrm d}W(t)\, .
\eeq
For $\ell \in {\mathbb N}$, we denote by $C_p^\infty(\dbR^\ell)$ the {space} of all smooth functions $g:\dbR^\ell \to \dbR$ such that $g$ and all of its partial derivatives have  polynomial growth.
Let  $\cP$ be the {set} of ${\mathbb R}$-valued  random variables of the form
\begin{equation}\label{eq5.1}
F=g\bigl(W(h_1),\cdots,W(h_\ell) \bigr) 
\end{equation}
for some $g \in C_p^{\infty}(\dbR^\ell)$, $\ell \in {\mathbb N}$, and   $h_1,\ldots,h_\ell\in L^2(0,T;{\mathbb R})$.
To any $F\in \cP$ we define its  ${\mathbb R}$-valued Malliavin derivative 
$DF := \{ D_tF;\, 0 \leq t \leq T\}$ process via
\beq
D_t F=\sum\limits_{i=1}^\ell \frac{\partial g}{\partial x_i} (W(h_1),\cdots,W(h_\ell))h_i(t)\, .
\eeq
In general, we can define the $k$-th iterated derivative of $F$ by $D^kF=D(D^{k-1}F)$, for any
$k\in {\mathbb N}$.

Now we extend the derivative operator to $\dbK$-valued variables. For any $k\in \dbN$, and $u$ in the set of $\dbK$-valued variables:
\beq
\cP_\dbK= \Bigl\{ u=\sum_{j=1}^n F_j \phi_j: F_j\in \cP,\,\phi_j\in \dbK,\, n\in \dbN  \Bigr\}\, ,
\eeq
we can define the $k$-th iterated derivative of $u$ by
\beq
D^k u=\sum_{j=1}^n D^kF_j\otimes \phi_j\, .
\eeq
For $p \geq 1$, we define the norm $\|\cdot\|_{k,p}$  via\beq
\|u\|_{k,p} :=\bigg(\dbE \bigl[\|u\|_\dbK^{p} +\sum_{j=1}^k \lt\|D^ju\rt\|_{\lt({ L^2(0,T;{\mathbb R})}\rt)^{\otimes j}\otimes \dbK}^p\Bigr]\bigg)^{\frac 1 p}\, .
\eeq
Then $\dbD^{k,p}({\mathbb K})$ is the completion of $\cP$ under the norm $\|\cdot\|_{k,p}$.

We may now express $Z_h$ in BSDE \eqref{bsde} in terms of the Malliavin derivative of $Y_h$.
%

\begin{lemma}[\cite{ElKaroui-Peng-Quenez97}, Prop.~5.3]\label{malliavin}
Suppose that  $Y_{T,h}\in \dbD^{1,2}(\dbL^2)$, $f \in L^2_{\dbF}\bigl(\Omega; L^2(0,T;\dbL^2)\bigr)$, and
\beq
\me \Bigl[\int_0^T\|D_\th Y_{T,h}\|^2_{\dbL^2}\, {\rm d}\th\Bigr] +\me\Bigl[\int_0^T\int_0^T\|D_\th f(t)\|^2_{\dbL^2}\, {\rm d}t {\rm d}\th\Bigr]<\infty\, .
\eeq 
Let $(Y_h ,Z_h )$ be the solution to BSDE \eqref{bsde}. Then 
$$(Y_h ,Z_h )\in L^2_\dbF\Bigl(\Omega; C\bigl([0,T];\dbD^{1,2}(\dbL^2)\bigr) \times L^2\bigl(0,T;\dbD^{1,2}(\dbL^2)\bigr)\Bigr)\,,$$ and its Malliavin derivative $(D_\th Y_h,D_\th Z_h)$ solves
\bel{w0911e20}
\lt\{
\bal
&D_\th Y_h(t) -D_\th Y_h(T)+\int_t^T - \Delta_hD_\th Y_h(s)+\Pi_h^1D_\th f(s) \, {\rm d}s\\
&\qq\qq\qq\qq\qq=-\int_t^T D_\th Z_h(s)\, {\rm d}W(s)    \qquad 0\leq \th\leq t\leq T \,, \\
&{D_\th Y_h(t)=D_\th Z_h(t)=0   \qquad 0\leq t<\th\leq T}\, .
\eal
\rt.
\ee
Moreover, $\{D_tY_h(t):\, 0\leq t\leq T\}$ is a version of $\{Z_h(t):\,0\leq t\leq T\}$.
\el
%
%
%

%

\section{Strong rates of convergence for a space-time discretization of the BSPDE \eqref{bshe}}\label{error_bspde}

In this section, we introduce the time discretization scheme \eqref{w0911e10} to approximate the solution $(Y_h, Z_h)$ to the {\bf BSPDE}$_h$ \eqref{bsde} by a finite sequence 
$\{\bigl( Y^n_h, Z^n_h\bigr) \}_{n=0}^{N-1}$ on a mesh $I_\tau$. The main results
are Theorems \ref{w0911t2} and \ref{w0911t2ab} in Subsection
\ref{error_bspde2}. Their derivation  crucially hinges on the time regularity of the solution $(Y_h, Z_h)$ to \eqref{bsde}, which is provided in the subsequent 
Subsection \ref{error_bspde_holder}.

\subsection{Uniform bounds for temporal increments of the solution $(Y_h, Z_h)$ to 
\eqref{bsde}}\label{error_bspde_holder}
We start with the derivation of uniform estimates for $Y_h$ which control its temporal increments.
We note again that all involved generic constants $C>0$ do not depend on $h$.

\begin{lemma}\label{w0911l2}
Suppose that $Y_{T,h}\in L^2_{\mf_T}(\O;\dbH_0^1),\,f\in L^2_\dbF(\O;L^2(0,T;\dbH_0^1))$, $I_\t$ is a time
 partition of $[0,T]$.
Let  $(Y_h, Z_h)$ be the solution to \eqref{bsde}. Then
\begin{enumerate}[{\rm (i)}]
\item 
\beq
\sum_{n=0}^{N-1}\me\lt[\int_{t_n}^{t_{n+1}}\|Y_h(t)-Y_h(t_n)\|_{\dbL^2}^2\rd t\rt]
\leq C\t \me \lt[\| Y_{T,h}\|_{\dbH_0^1}^2+\int_0^T\|f(t)\|_{\dbL^2}^2\rd t\rt]\, .
\eeq

\item 
Assume further $\sup_{h>0}\me\bigl[\|\D_hY_{T,h}\|_{\dbL^2}^2\bigr]<\infty$. Then
\beq
\sum_{n=0}^{N-1}\me\lt[\int_{t_n}^{t_{n+1}}\|\nb\lt(Y_h(t)-Y_h(t_n)\rt)\|_{\dbL^2}^2\rd t\rt]
\leq C\t \me \lt[\|\D_h Y_{T,h}\|_{\dbL^2}^2+\|\nb Y_{T,h}\|_{\dbH_0^1}^2+\int_0^T\|f(t)\|_{\dbH_0^1}^2\rd t\rt]\, .
\eeq

\item 
Assume further $\sup_{h>0}\me\bigl[\|\D_hY_{T,h}\|_{\dbH_0^1}^2\bigr]<\infty$ and $f\in L^2_\dbF(\O;L^2(0,T;\dbH_0^1\cap\dbH^2))$. Then
\beq
\sum_{n=0}^{N-1}\me\lt[\int_{t_n}^{t_{n+1}}\|\D_h\lt(Y_h(t)-Y_h(t_n)\rt)\|_{\dbL^2}^2\rd t\rt]
\leq C\t \me \lt[\|\D_h Y_{T,h}\|_{\dbH_0^1}^2+\int_0^T\|f(t)\|_{\dbH_2}^2\rd t\rt].
\eeq
\end{enumerate}
Here, the constant $C>0$ only depends on $Y_{T,h},\,f$ and $T$.
\end{lemma}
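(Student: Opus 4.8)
The plan is to establish each of the three temporal-increment bounds by testing the BSDE \eqref{bsde} against a suitable fixed test direction and exploiting the martingale structure to kill the stochastic integral in expectation. For part (i), I would start from the integral form of \eqref{bsde} written over the interval $[t_n,t]$, namely
\beq
Y_h(t)-Y_h(t_n)=\int_{t_n}^t \bigl[\D_h Y_h(s)-\Pi_h^1 f(s)\bigr]\rd s-\int_{t_n}^t Z_h(s)\rd W(s)\, ,
\eeq
take $\dbL^2$-norms, square, and apply $\me[\cdot]$. The It\^o isometry converts the stochastic term into $\me\int_{t_n}^t\|Z_h(s)\|_{\dbL^2}^2\rd s$, while Cauchy--Schwarz on the drift term produces a factor $(t-t_n)\le\t$ times $\me\int_{t_n}^t\|\D_h Y_h\|_{\dbL^2}^2+\|\Pi_h^1 f\|_{\dbL^2}^2\rd s$. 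Integrating in $t$ over $[t_n,t_{n+1}]$ and summing over $n$, the drift contribution carries an extra factor $\t$, and the stochastic contribution is controlled by $\t\,\me\int_0^T\|Z_h\|_{\dbL^2}^2\rd s$ after summing. I would then close the estimate by invoking the a priori bound \eqref{bspde19} (with $\nabla Y_{T,h}$, $\D_h Y_h$, and $\nabla Z_h$ bounds, together with the $\dbL^2$-stability of $\Pi_h^1$) to control $\me\int_0^T\|Z_h\|_{\dbL^2}^2$ and $\me\int_0^T\|\D_h Y_h\|_{\dbL^2}^2$ by the right-hand data $\me[\|Y_{T,h}\|_{\dbH_0^1}^2+\int_0^T\|f\|_{\dbL^2}^2\rd t]$.

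For parts (ii) and (iii) the strategy is identical but at one and two higher levels of spatial regularity: I would apply the operator $\nabla$ (resp.\ $\D_h$) to the increment identity, or equivalently test against $-\D_h(\cdot)$ (resp.\ $\D_h^2(\cdot)$) using the self-adjointness of $\D_h$ on $\dbV_h^1$. In each case the same It\^o-isometry-plus-Cauchy--Schwarz mechanism yields a factor $\t$, and the closing step uses the higher-order a priori estimates. Specifically, for (ii) I need $\me\int_0^T\|\nabla Z_h\|_{\dbL^2}^2$ and $\me\int_0^T\|\D_h Y_h\|_{\dbL^2}^2$, again available from \eqref{bspde19}, now bounded by $\me[\|\D_h Y_{T,h}\|_{\dbL^2}^2+\|\nabla Y_{T,h}\|_{\dbH_0^1}^2+\int_0^T\|f\|_{\dbH_0^1}^2\rd t]$ under the additional hypothesis $\sup_h\me\|\D_h Y_{T,h}\|_{\dbL^2}^2<\infty$. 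For (iii), the extra regularity $f\in L^2_\dbF(\O;L^2(0,T;\dbH_0^1\cap\dbH^2))$ and $\sup_h\me\|\D_h Y_{T,h}\|_{\dbH_0^1}^2<\infty$ feed a corresponding $\dbH^2$-level a priori bound for $\D_h Y_h$ and $\nabla Z_h$; here the inverse-type estimate \eqref{estim} is what lets me pass between $\|\D_h\Pi_h^1(\cdot)\|_{\dbL^2}$ and $\|\nabla^2(\cdot)\|_{\dbL^2}$ when handling the forcing $\Pi_h^1 f$.

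The main obstacle I anticipate is not the increment mechanism itself, which is routine, but rather securing the higher-order a priori bounds needed to close parts (ii) and (iii)---in particular controlling $\me\int_0^T\|\nabla Z_h\|_{\dbL^2}^2$ and $\me\int_0^T\|\D_h Z_h\|_{\dbL^2}^2$ uniformly in $h$. These are the genuinely backward-stochastic ingredients: obtaining them requires testing the BSDE with $-\D_h Y_h$ (resp.\ $\D_h^2 Y_h$), applying It\^o's formula to $\|\nabla Y_h(t)\|_{\dbL^2}^2$ (resp.\ $\|\D_h Y_h(t)\|_{\dbL^2}^2$), and then reading off the $Z_h$-quadratic-variation term that the It\^o correction generates. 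Care is needed because, unlike the forward case, one cannot simply propagate regularity from an initial datum; instead the terminal datum $Y_{T,h}$ and the forcing $f$ must supply it, and one must verify that the constants remain $h$-independent, which is precisely where the $\dbH^1$-stability of $\Pi_h^1$ and the discrete-Laplacian estimate \eqref{estim} are essential. Once these uniform $Z_h$-bounds are in hand, the summation over $n$ and extraction of the factor $\t$ are straightforward, and I would present the three parts in increasing order of regularity so that each reuses the structure of the previous one.
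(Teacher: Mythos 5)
Your proposal follows essentially the same route as the paper's proof: the paper also bounds the increments via the integral form of \eqref{bsde} (Cauchy--Schwarz on the drift, It\^o isometry on the martingale part, extracting the factor $\tau$), and then closes part (i) with a priori bounds on $\me\int_0^T\|\D_h Y_h\|_{\dbL^2}^2\,{\rm d}t$ and $\me\int_0^T\|Z_h\|_{\dbL^2}^2\,{\rm d}t$ obtained by applying It\^o's formula to $\|Y_h\|_{\dbL^2}^2$ and $\|\nb Y_h\|_{\dbL^2}^2$, treating (ii) and (iii) ``in a similar vein'' at successively higher regularity exactly as you outline, including the role of \eqref{estim} for the forcing in (iii). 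No gaps.
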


\begin{proof}  We only prove (i). The other statements can be proved in a similar vein.

By BSDE \eqref{bsde}, we get
\beq
&\sum_{n=0}^{N-1}\me\lt[\int_{t_n}^{t_{n+1}}\|Y_h(t)-Y_h(t_n)\|_{\dbL^2}^2\rd t\rt]
\leq C\t \int_0^T \me \lt[ \|\D_hY_h(t)\|_{\dbL^2}^2+\|\Pi_h^1f(t)\|_{\dbL^2}^2+\|Z_h(t)\|_{\dbL^2}^2 \rt]\rd t.
\eeq
Applying It\^o's formula for $\|Y_h\|_{\dbL^2}^2$ and $\|\nb Y_h\|_{\dbL^2}^2$ in \eqref{bsde}, we see
\beq
\me\lt[\int_0^T\|Z_h(t)\|_{\dbL^2}^2 \rd t \rt]\leq& C\me\lt[ \|Y_{T,h}\|_{\dbL^2}^2+\int_0^T\|\Pi_h^1f(t)\|_{\dbL^2}^2 \rd t \rt],\\
\me\lt[\int_0^T \|\D_hY_h(t)\|_{\dbL^2}^2 \rd t \rt] \leq& C\me\lt[ \|\nb Y_{T,h}\|_{\dbL^2}^2+\int_0^T\|\Pi_h^1f(t)\|_{\dbL^2}^2 \rd t \rt].
\eeq
Then (i) can be deduced by the above estimates.
\end{proof}

\bl{w0913l1}
Suppose that $Y_{T,h}\in \dbD^{1,2}(\dbH_0^1)$, and $f\in L^2_\dbF(\O;L^2(0,T;\dbL^2))$ satisfy
\beq
\sup_{0\leq t\leq T}\me\bigl[\|D_t Y_{T,h}\|_{\dbH_0^1}^2\bigr]+\sup_{0\leq \th\leq T}\sup_{0\leq t\leq T}\me\bigl[\|D_\th D_t Y_{T,h}\|_{\dbL^2}^2\bigr]<C,
\eeq
$$\sup_{0\leq t\leq T}\me \Bigl[\int_t^T\|D_tf(\t)\|^2_{{\mathbb L}^2} \, {\rm d}\t\Bigr] +
\sup_{0\leq \th\leq T}\sup_{0\leq t\leq T}\me \Bigl[\int_{\th\vee t}^T\|D_\th D_tf(\t)\|^2_{{\mathbb L}^2} \, {\rm d}\t\Bigr] \leq C\, ,$$
and for any $s,t\in [0,T]$ with $s\leq t$, 
\begin{equation}\label{ass1}
\me\bigl[\|\lt(D_t-D_s\rt) Y_{T,h} \|^2_{{\mathbb L}^2}\bigr]+\me \Bigl[\int_t^T\| \lt(D_t-D_s  \rt)f(\t) \|^2_{{\mathbb L}^2} \, {\rm d}\t
\Bigr]\leq C|t-s|\, .
\end{equation}
Then, it holds that
\beq 
{\mathbb E}\bigl[\|Z_h(t)-Z_h(s)\|^2_{{\mathbb L}^2} \bigr]\leq C |t-s|\, .
\eeq
\el

\begin{proof}
By Lemma \ref{malliavin},
we know that  $Z_h(t)=D_tY_h(t)$ for all $0 \leq t \leq T$, and therefore, for $0\leq s\leq t\leq T$,
\bel{w0913e10}
\frac{1}{2} {\mathbb E} \bigl[ \|Z_h(t)-Z_h(s)\|^2_{{\mathbb L}^2}\bigr]\leq \me\bigl[\|D_t Y_h(t)-D_sY_h(t)\|^2_{{\mathbb L}^2}\bigr]+\me \bigl[\|D_s Y_h(t)-D_sY_h(s)\|^2_{{\mathbb L}^2}\bigr]\, .
\ee
In what follows, we estimate the two terms on the right-hand side of \eqref{w0913e10} independently.

\ms

\no {\bf Step 1.}
Fix two $0\leq \th_2\leq \th_1\leq t\leq T$  and define $\delta_\th=D_{\th_1}-D_{\th_2}$.
 By \eqref{w0911e20}, we have the BSDE
\begin{equation}\label{malli-1}
\bal
\delta_\th Y_h(t)-\delta_\th Y_h(T)+ \int_t^T \lt[ -\Delta_h \delta_\th Y_h(s)+\Pi_h^1\d_\th f(s)\rt] \, {\rm d}s
= - \int_t^T \delta_\th Z_h(s)
\, {\rm d}W(s) \,\,\, \forall\, t \in [ \th_1,T]\, .
\eal
\end{equation}
It\^o's formula and Poincar\'{e}'s inequality lead to 
\begin{equation*}
\bal
&\me \bigl[\|\delta_\th Y_h(t)\|^2_{{\mathbb L}^2}\bigr]+
\int_t^T \me\lt[  \Vert \nabla \delta_\th Y_h(s)\Vert^2_{{\mathbb L}^2}  +  \|\delta_\th Z_h(s)\|^2_{{\mathbb L}^2} \rt]\, {\rm d}s\\
&\qquad \leq \me \Bigl[\|\delta_\th Y_h(T)\|^2_{{\mathbb L}^2} + \int_t^T\| \Pi_h^1\d_\th f(s) \|^2_{{\mathbb L}^2} \, {\rm d}s\Bigr]  \, .
\eal
\end{equation*}
%
%
%
%
%
%
 Taking $\theta_2 = s$ and $\theta_1 = t$ and using \eqref{ass1} then lead to
\begin{eqnarray}\nonumber
\me \bigl[\|D_tY_h(t)-D_sY_h(t)\|^2_{{\mathbb L}^2}\bigr]
&\leq&  \me \Bigl[\|\lt(D_t-D_s \rt) Y_{h,T}\|^2_{{\mathbb L}^2}  + \int_t^T\| \lt(D_t-D_s \rt)  f( \tau) \|^2_{{\mathbb L}^2} \rd \tau\Bigr] \\
\label{w0913e15}
&\leq& C \vert  t-s\vert\, .
\end{eqnarray}
\ms

\no {\bf Step 2.} By \eqref{w0911e20}, It\^o's isometry together with Poincar\'{e}'s inequality,
\bel{w0913e16}
\bal
&\me\bigl[\|D_sY_h(t)-D_sY_h(s)\|^2_{{\mathbb L}^2}\bigr] \\
=&
 \me \Bigl[\bigl\Vert  \int_s^t \lt[-\Delta_h D_sY_h(\t)+\Pi_h^1 D_sf(\t)\rt] \, {\rm d}\t+\int_s^t D_sZ_h(\t)\, {\rm d}W(\t) \bigr\Vert^2_{{\mathbb L}^2}\Bigr]\\ 
\leq&  2|t-s| \, \int_s^T \me  \bigl[ \| \Delta_hD_sY_h(\t)\|^2_{{\mathbb L}^2}+ \|\Pi_h^1 D_sf(\t)\|^2_{{\mathbb L}^2} \bigr]\, {\rm d}\t 
+2\, \int_s^t \me \bigl[\|D_sZ_h(\t)\|^2_{{\mathbb L}^2}\bigr]\, {\rm d}\t \\ 
\leq& C|t-s| \, \me \Bigl[ \|\nabla D_s Y_h(T) \|^2_{{\mathbb L}^2} + \int_s^T\|\Pi_h^1 D_sf(\t)\|^2_{{\mathbb L}^2} \, {\rm d}\t \Bigr]\\
&+C|t-s| \, \sup_{0\leq \th\leq T}\sup_{0\leq t\leq T}\me \Bigl[\|D_\th D_t Y_{T,h}\|_{\dbL^2}^2+\int_{\th\vee t}^T\|D_\th D_tf(\t)\|^2_{{\mathbb L}^2} \, {\rm d}\t\Bigr]\,. 
\eal
\ee
Inserting \eqref{w0913e16} and \eqref{w0913e15} into \eqref{w0913e10} then settles the proof of the lemma.
\end{proof}

\subsection{A time-implicit space-time discretization of the
BSPDE \eqref{bshe}}\label{error_bspde2}
We use an implicit time discretization on the mesh $I_\tau$ to approximate {\bf BSPDE}$_h$ \eqref{bsde}; we refer to it as {\bf BSPDE}$_{h\t}$, and the discretization reads as follows: For every $0 \leq n \leq N-1$, find $\bigl( Y^n_h, Z^n_h\bigr) \in L^2_{{\mathcal F}_{t_n}}\bigl( \Omega; {\mathbb V}_h^{1} \times {\mathbb V}_h^{1} \bigr)$ such that
\bel{w0911e10}
\lt\{
\bal
&[\mathds{1} - \tau \Delta_h]Y_h^n = {\mathbb E}\bigl[ Y^{n+1}_h\bigl\vert
{\mathcal F}_{t_n}\bigr] - {\tau} \Pi_h^1 f(t_n)\, ,\\
&Z_h^n = \frac{1}{\tau} {\mathbb E}\Bigl[ Y_h^{n+1} \D_{n+1}W \Big|\mf_{t_n}\Bigr] \\
&Y_h^N=Y_{T,h}\, .
\eal
\rt.
\ee
We introduce an auxiliary BSDE system on each time interval $[t_n, t_{n+1}]$ for the convergence analysis of \eqref{w0911e10}: Find $(\overline{Y}_{h,n}, \overline{Z}_{h,n}) \in
L^2_{\mathbb F}\bigl(\Omega; C([t_{n}, t_{n+1}]; {\mathbb V}^1_h) \times  L^2(t_n, t_{n+1};  {\mathbb V}_h^1 )\bigr)$ such that
\bel{w0911e11}
\lt\{
\bal
& \overline{Y}_{h,n}(t)  - \overline{Y}_{h,n}(t_{n+1})  + \int_t^{t_{n+1}} \lt[  -\Delta_h {Y^n_h} + \Pi_h^1 f({t_n})\rt] \,  {\rm d}s= -\int_t^{t_{n+1}}\, \overline{Z}_{h,n}(s) \, {\rm d}W(s)\\
&\overline{Y}_{h,n}(T) =Y_{T,h}\, .
\eal
\rt.
\ee
We now construct $(\overline{Y}_h, \overline{Z}_h) \in L^2_{\mathbb F}\bigl(\Omega; C([0, T]; {\mathbb V}^1_h) \times  L^2(0, T;  {\mathbb V}_h^1 )\bigr)$ via
$\bigl(\overline{Y}_h\bigl\vert_{[t_{n}, t_{n+1}]}, \overline{Z}_h\bigl\vert_{[t_{n}, t_{n+1}]} \bigr) := (\overline{Y}_{h,n}, \overline{Z}_{h,n})$. 
Note that the integrand in the drift is evaluated with the help of 
the solution part $\{ Y^n_h\}_{n=0}^N$ from \eqref{w0911e11}. 
\bl{w0911l1} Let $\{ (Y^n_h, Z^n_h)\}_{n=0}^{N-1}$ solve \eqref{w0911e10}, and
$(\overline{Y}_h, \overline{Z}_h)$ solve \eqref{w0911e11}. For all $0 \leq n \leq N-1$, 
\beq
\bal
Y^n_h = \overline{Y}_h(t_n)  \,, \qquad  {Z^n_h =  \frac{1}{\tau} {\mathbb E}\Bigl[ \int_{t_n}^{t_{n+1}} \overline{Z}_h(s) \, {\rm d}s \Bigl\vert {\mathcal F}_{t_n}
\Bigl]} \, .
\eal
\eeq
\el
\begin{proof}
The first identity is immediate; the second follows from multiplication of \eqref{w0911e11} with the admissible {$ \int_{t_n}^{t_{n+1}} 1\, {\rm d}W(s)$}, and 
application of conditional expectation ${\mathbb E}[\cdot \vert {\mathcal F}_{t_n}]$.
\end{proof}

We may now prove a  strong error estimate for the first component
of $(Y_h, Z_h)$ that solves \eqref{bsde}.
\bt{w0911t2} 
Suppose that $Y_{T,h}\in L^2_{\mf_T}(\O;\dbH_0^1),\,\D_h Y_{T,h}\in L^2_{\mf_T}(\O;\dbL^2),\,f\in L^2_\dbF(\O;L^2(0,T;\dbH_0^1))$ as 
well as 
\beq
\sum_{n=0}^{N-1}\int_{t_n}^{t_{n+1}}{\mathbb E}\bigl[  \Vert f(t) - f(t_n)\Vert^2_{{\mathbb L}^2}\bigr] \leq C\t \, .
\eeq
Let $(Y_h, Z_h)$ solve \eqref{bsde}, and  $\{ (Y^n_h, Z^n_h)\}_{n=0}^{N-1}$ solve \eqref{w0911e10}. 
There exists a constant $C \equiv (Y_{T,h}, f,T) >0$ such that 
\bel{w1201e1}
\max_{0 \leq n \leq N} {\mathbb E}\bigl[ \Vert Y_h(t_n)-Y_h^n \Vert_{{\mathbb L}^2}^2 \bigr]
+\t \, \sum_{n=0}^{N-1}\me\bigl[\|\nb(Y_h(t_n)-Y_h^n)\|_{\dbL^2}^2\bigr]\leq C\tau\, .
\ee
\et

\begin{proof}
Consider $(\overline{Y}_h, \overline{Z}_h)$ from \eqref{w0911e11}, and define
$\{ e_n\}_{n=0}^{N-1}$, where each $e_n=Y_h(t_n)-\overline{Y}_h(t_n)$ is a ${\mathbb V}_h^1$-valued random variable. Subtracting \eqref{w0911e11} from \eqref{bsde} yields ${\mathbb P}$-a.s.
\begin{eqnarray}\nonumber
e_{n}-e_{n+1} - \int_{t_n}^{t_{n+1}}\Delta_he_n\, {\rm d}s &=& \int_{t_n}^{t_{n+1}} \Delta_h \bigl[Y_h(s)-Y_h(t_n)\bigr] - \Pi_h^1[f(s) - f(t_n)] \, {\rm d}s \\ \label{error_101}
&& +\int_{t_n}^{t_{n+1}} \bigl[Z_h(s)-\overline{Z}_h(s) \bigr] \, {\rm d}W(s)\, .
\end{eqnarray}
Fixing one realization $\omega \in \Omega$, testing with the admissible
$e_n(\omega) \in {\mathbb V}_h^1$, using binomial formula, and then taking expectation, and Poincare's and Young's inequality lead to
\bel{w1208e1}
\bal
&\frac 1 2 {\mathbb E}\Bigl[ \|e_{n}\|^2_{{\mathbb L}^2}-\|e_{n+1}\|^2_{{\mathbb L}^2}+ \Vert e_n-e_{n+1}\Vert^2_{{\mathbb L}^2}+2\int_{t_n}^{t_{n+1}}\Vert \nabla e_n
\Vert_{{\mathbb L}^2}^2 \, {\rm d}s \Bigr]\\ 
&\quad \leq  \int_{t_n}^{t_{n+1}}  \me \Bigl[ \frac{1}{2} \bigl\Vert \nabla \bigl[Y_h(s)-Y_h(t_n)\bigr] \bigr\Vert_{\dbL^2}^2 + \Vert f(s) - f(t_n)\Vert^2_{{\mathbb L}^2} \Bigr] \, {\rm d}s  \\
&\qquad +\frac{1}{2}  \int_{t_n}^{t_{n+1}} \me\Bigl[ \| \nabla e_n\|^2_{\dbL^2}+\|e_n-e_{n+1}\|_{\dbL^2}^2 \Bigr]
\, {\rm d}s+\frac \t 2\me\bigl[\|e_{n+1}\|_{\dbL^2}\bigr]^2\, .
\eal
\ee
Subsequently, the discrete Gronwall  inequality leads to 
\begin{equation}\label{w0911w13}
\bal
\max_{0 \leq n \leq N}  {\mathbb E}\bigl[\Vert e_n \Vert_{{\mathbb L}^2}^2\bigr] 
\leq 
2e^T\, 
 \sum_{n=0}^{N-1}  \int_{t_n}^{t_{n+1}}  \me \Bigl[ \bigl\Vert \nabla \bigl[Y_h(s)-Y_h(t_n)
\bigr] \bigr\Vert^2_{{\mathbb L}^2} + \Vert f(s) - f(t_n)\Vert^2_{{\mathbb L}^2} \Bigr] \, {\rm d}s \, .
\eal
\end{equation} 
Then,  summing up over all steps of \eqref{w1208e1} yields
\bel{w1221e1}
\bal
&\sum_{n=0}^{N-1}\me\lt[\int_{t_n}^{t_{n+1}} \|\nb e_n\|_{\dbL^2}^2\rd s\rt]\\
&\leq \t \sum_{n=0}^{N-1}\me\bigl[\|e_{n+1}\|_{\dbL^2}\bigr]^2
+2\,  
 \sum_{n=0}^{N-1}  \int_{t_n}^{t_{n+1}} \me \Bigl[ \bigl\Vert \nabla \bigl[Y_h(s)-Y_h(t_n)
\bigr] \bigr\Vert^2_{{\mathbb L}^2} + \Vert f(s) - f(t_n)\Vert^2_{{\mathbb L}^2} \Bigr] \, {\rm d}s \, .
\eal
\ee
Then, \eqref{w0911w13}, \eqref{w1221e1} together with (ii) of Lemma \ref{w0911l2}, and \ref{w0911l1} lead to the desired estimate.
\end{proof}

\ms

By Theorems \ref{w0911t1}, \ref{w0911t2} and Lemma \ref{w0911l2} (i), we thus get the following convergence rate for the approximation $\{Y_h^n\}_{n=0}^{N}$ of the first solution component $Y$ to \eqref{vari-1} via
the space-time discretization scheme \eqref{w0911e10}, 
\begin{equation}\label{firstpartbspde}
\max_{0 \leq n \leq N}\me\bigl[\|Y(t_n)-Y_h^n\|^2_{{\mathbb L}^2}\bigr]
+ \sum_{n=0}^{N-1}\int_{t_n}^{t_{n+1}}\me\big[\| \nb \big(Y(t)-Y_h^n\big) \|_{\dbL^2}^2 \big]\rd t
\leq  C \lt(\tau + h^2 \rt)\, .
\end{equation}

\br{w0912r1}

If the drift term of \eqref{bshe} is $-\D Y(t,x)+f \bigl(t,x,Y(t,x)\bigr)$, with a Lipschitz nonlinearity $f$, we may apply a similar procedure to get the above convergence rate. However, the above strategy is not clear to be successful if $Z$ appears in the drift term.

\er

We now derive estimates for the approximation
$\{Z_h^n\}_{n=0}^{N-1}$ of the second solution component $Z$ to \eqref{vari-1}, which uses the characterization $Z_h(t) = D_tY_h(t)$, and \eqref{w0911e20}.

\bt{w0911t2ab} Let $(Y_h, Z_h)$ solve \eqref{vf2}, where data satisfy the assumptions in 
Lemma  \ref{w0911l2} (ii), Lemma \ref{w0913l1}, as well as
\beq
\sum_{n=0}^{N-1}\int_{t_n}^{t_{n+1}}{\mathbb E}\bigl[  \Vert f(t) - f(t_n)\Vert^2_{{\mathbb L}^2}\bigr] \leq C\t \, .
\eeq
Let $\{ (Y^n_h, Z^n_h)\}_{n=0}^{N-1}$ solve \eqref{w0911e10}. 
%
%
%
There exists a constant $C \equiv (Y_T, f,T) >0$ such that 
\beq
\sum_{n=0}^{N-1}\int_{t_n}^{t_{n+1}} \me\bigl[\|Z_h(t)-Z_h^n\|^2_{{\mathbb L}^2}\bigr]\, {\rm d}t \leq C \tau\, .
\eeq
\et
The proof begins with an estimate for $Z_h-\overline{Z}_h$, which exploits time regularity properties of the solution 
 $(Y_h, Z_h)$ in stronger norms; cf.~Lemma \ref{w0911l2}, (ii). 
Moreover, the following technical result is needed; see also \cite{Wang16}.

\begin{lemma}\label{conditional}
For any $\varphi \in L^2_{\dbF}(0,T;\dbK)$ and $0\leq s<t \leq T$, define
$$\varphi_0=\frac{1}{t-s}\me \Bigl[ \int_s^t\varphi(\tau) \, {\rm d} \tau\big|\mf_s\Bigr]\, .$$
For any $\xi\in L^2_{\mf_s}(\O;\dbK)$ there holds
$$\me \Bigl[ \int_s^t\|\varphi(\tau)-\varphi_0\|^2_\dbK \, {\rm d}\tau\Bigr] \leq \me\Bigl[\int_s^t\|\varphi(\tau)-\xi\|^2_\dbK  \, {\rm d}\tau\Bigr]\, .$$
\end{lemma}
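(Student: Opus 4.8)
The plan is to recognize this as the Pythagorean (best-approximation) identity for an orthogonal projection in the Hilbert space $L^2\bigl(\Omega \times (s,t); \dbK\bigr)$. Indeed, $\varphi_0$ is exactly the orthogonal projection of $\varphi$ onto the closed subspace of elements that are simultaneously $\mf_s$-measurable and constant in $\tau \in (s,t)$, while $\xi$ --- viewed as a function constant in $\tau$ --- is an arbitrary member of that subspace. Once this is seen, the claimed inequality is simply the statement that a projection minimizes the distance to the subspace, and the proof reduces to verifying the associated orthogonality relation.

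Concretely, I would write $\varphi(\tau) - \xi = \bigl(\varphi(\tau) - \varphi_0\bigr) + \bigl(\varphi_0 - \xi\bigr)$ and expand the squared norm inside the expectation, so that
\beq
\me \Bigl[ \int_s^t \|\varphi(\tau)-\xi\|^2_\dbK \, {\rm d}\tau \Bigr]
=& \me \Bigl[ \int_s^t \|\varphi(\tau)-\varphi_0\|^2_\dbK \, {\rm d}\tau \Bigr] + 2 I \\
&+ \me \Bigl[ \int_s^t \|\varphi_0-\xi\|^2_\dbK \, {\rm d}\tau \Bigr]\, ,
\eeq
where $I := \me\bigl[\int_s^t (\varphi(\tau)-\varphi_0,\, \varphi_0 - \xi)_\dbK \, {\rm d}\tau\bigr]$ is the cross term. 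Since the final term on the right is manifestly nonnegative, the entire lemma reduces to proving that $I = 0$.

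To establish $I = 0$, I would proceed in three short steps. First, because $\varphi_0 - \xi$ is independent of $\tau$, it may be pulled out of the time integral, leaving the inner product of $\int_s^t \bigl(\varphi(\tau) - \varphi_0\bigr)\,{\rm d}\tau$ with $\varphi_0 - \xi$. Second, conditioning on $\mf_s$ and using that $\varphi_0 - \xi$ is $\mf_s$-measurable, I factor $\varphi_0 - \xi$ out of the conditional expectation, so that $I = \me\bigl[\bigl(\me[\int_s^t (\varphi(\tau) - \varphi_0)\,{\rm d}\tau \mid \mf_s],\, \varphi_0 - \xi\bigr)_\dbK\bigr]$. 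Third, the very definition of $\varphi_0$ gives $\me[\int_s^t \varphi(\tau)\,{\rm d}\tau \mid \mf_s] = (t-s)\varphi_0$, and the $\mf_s$-measurability of $\varphi_0$ gives $\me[(t-s)\varphi_0 \mid \mf_s] = (t-s)\varphi_0$; hence the inner conditional expectation equals $(t-s)\varphi_0 - (t-s)\varphi_0 = 0$, so $I = 0$ and the inequality follows.

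I do not anticipate a genuine obstacle, as this is a routine projection argument; the only points demanding care are the integrability justifications needed to interchange the (conditional) expectation with the time integral and to guarantee that all inner products are well defined. These follow from $\varphi \in L^2_{\dbF}(0,T;\dbK)$ and $\xi \in L^2_{\mf_s}(\O;\dbK)$ via the Cauchy--Schwarz inequality and Fubini's theorem, together with the observation that $\varphi_0 \in L^2_{\mf_s}(\O;\dbK)$ by Jensen's inequality for conditional expectations.
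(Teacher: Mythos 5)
Your proof is correct and follows essentially the same route as the paper's: the identical decomposition $\varphi(\tau)-\xi=(\varphi(\tau)-\varphi_0)+(\varphi_0-\xi)$, with the cross term annihilated by pulling the $\mf_s$-measurable factor out of the conditional expectation and invoking the defining property of $\varphi_0$. The only difference is that the paper first projects onto the finite-dimensional subspaces ${\rm span}\{\phi_1,\dots,\phi_n\}$ of $\dbK$ and lets $n\uparrow\infty$ at the end --- a technical device for justifying the interchange of conditional expectation and inner product that you bypass by working directly with the $\dbK$-valued (Bochner) conditional expectation, which is equally legitimate given the $L^2$ integrability you note.
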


\begin{proof}
Let $\{\phi_i\}_{i=1}^\infty$ be an orthonormal basis of $\dbK$, and $\Pi_n$ be the projection from $\dbK$
to $\mbox{span}\{\phi_i:i=1,2,\cds,n\}$.
For any $n\in \dbZ$, one has
\begin{equation*}\label{wyyt1}
\begin{aligned}
  &\me \Bigl[\int_s^t\|\varphi(\tau)-\xi\|^2_\dbK\mathrm \, {\rm d}\tau\Bigr]\geq \me
  \Bigl[\int_s^t\|\Pi_n \bigl(\varphi(\tau)-\xi \bigr)\|^2_\dbK\, \mathrm d\tau\Bigr]\\
 =&\me\Bigl[\int_s^t \|\Pi_n\bigl(\varphi(\tau)-\varphi_0\bigr)\|_\dbK^2 +\|\Pi_n(\varphi_0-\xi)\|_\dbK^2  + 2\bigl( \Pi_n\bigl(\varphi(\tau)-\varphi_0\bigr),\Pi_n(\varphi_0-\xi ) \bigr)_{{\mathbb K}}\, \mathrm {\rm d}\tau \Bigr]\\
 =&\me \Bigl[\int_s^t \|\Pi_n(\varphi(\tau)-\varphi_0)\|_\dbK^2 + \|\Pi_n(\varphi_0-\xi)|_\dbK^2\, \mathrm d\tau \Bigr]\\
  &\q+ 2\me \Biggl[\me \Bigl[ \bigl( \int_s^t\Pi_n\varphi(\tau)\, \mathrm d\tau-\me\bigl[\int_s^t\Pi_n\varphi(\tau)\, \mathrm d\tau\big|\mf_s\bigr],\Pi_n(\varphi_0-\xi)  \bigr)_{{\mathbb K}} \big|\mf_s \Bigr] \Biggr] \\
%
\end{aligned}
\end{equation*}
Since, $\varphi_0$ and $\xi$ are $\mf_s$-measurable, the last term vanishes, i.e.,
\begin{eqnarray*}
&&\me \lt[\me \Bigl[ \bigl( \int_s^t\Pi_n\varphi(\tau)\, \mathrm d\tau-\me\bigl[\int_s^t\Pi_n\varphi(\tau)\, \mathrm d\tau\big|\mf_s\bigr],\Pi_n(\varphi_0-\xi)  \bigr)_{{\mathbb K}} \big|\mf_s \Bigr] \rt] \\
&&\qquad = \me \lt[ \bigl( \me\Bigl[ \int_s^t\Pi_n\varphi(\tau)\, \mathrm d\tau-\me\bigl[\int_s^t\Pi_n\varphi(\tau)\, \mathrm d\tau\big|\mf_s\bigr] \Bigl\vert {\mathcal F}_s\Bigr],\Pi_n(\varphi_0-\xi)  \bigr)_{{\mathbb K}} \rt] = 0\, .\\
\end{eqnarray*}
Therefore,
\beq
\me \Bigl[\int_s^t\|\varphi(\tau)-\xi\|^2_\dbK\, \mathrm d\tau\Bigr] \geq &\me \Bigl[\int_s^t \|\Pi_n(\varphi(\tau)-\varphi_0)\|_\dbK^2\, \mathrm d\tau\Bigr]+\me \Bigl[\int_s^t\|\Pi_n(\varphi_0-\xi)|_\dbK^2\, \mathrm d\tau\Bigr]\\
\geq&\me \Bigl[\int_s^t \|\Pi_n(\varphi(\tau)-\varphi_0)\|_\dbK^2\, \mathrm d\tau\Bigr]\, .
\eeq
By letting $n \uparrow \infty$, we may therefore conclude
\beq
\me \Bigl[\int_s^t \|\varphi(\tau)-\varphi_0\|_\dbK^2\, \mathrm d\tau\Bigr] =\lim_{n\rightarrow \infty}\me\Bigl[\int_s^t \|\Pi_n \bigl(\varphi(\tau)-\varphi_0 \bigr)\|_\dbK^2\, \mathrm d\tau\Bigr]\leq \me\Bigl[\int_s^t\|\varphi(\tau)-\xi\|^2_\dbK\, \mathrm d\tau\Bigr]\, ,
\eeq
which completes the proof.
\end{proof}

\begin{proof}[\bf {Proof of Theorem~\ref{w0911t2ab}}]
{\bf Step 1.}
Claim: theres exists a constant $C$, which is independent of $h,\,\t$, such that
\bel{w1201e2}
\me\Bigl[\int_0^T\|Z_h(s)-\cl Z_h(s)\|^2_{{\mathbb L}^2}\rd s\Bigr]\leq C\t\, .
\ee
%
%

We recall the definition of $\{ e_n\}_{n=0}^{N-1}$ in the proof of Theorem \ref{w0911t2}, as well as equation \eqref{error_101}, which we recast into the form
\begin{eqnarray}\nonumber
&&(\mathds{1} - \tau \Delta_h) e_{n} + \int_{t_n}^{t_{n+1}} \overline{Z}_h(s) - Z_h(s)\, {\rm d}W(s) \\ \nonumber
&&\qquad = e_{n+1}   + \int_{t_n}^{t_{n+1}}\Big[\Delta_h \lt(Y_h(s)-Y_h(t_n)\rt) - \Pi_h^1\lt(f(s) - f(t_n)\rt)\Big]\, {\rm d}s \, .
\end{eqnarray}
%
Taking squares and afterwards expectations on both sides, by binomial formula, It\^o isometry, and Young's inequality,
we arrive at
\begin{eqnarray*}
&&{\mathbb E}\Bigl[ \Vert (\mathds{1} - \tau \Delta_h)e_n\Vert^2_{{\mathbb L}^2}
+ \Vert \int_{t_n}^{t_{n+1}} \overline{Z}_h(s) - Z_h(s)\, 
{\rm d}W(s) \Vert^2_{{\mathbb L}^2}\Bigr] \\
&&\quad = {\mathbb E}\Bigl[ \Vert (\mathds{1} - \tau \Delta_h)e_n\Vert^2_{{\mathbb L}^2}
+  \int_{t_n}^{t_{n+1}} \Vert \overline{Z}_h(s) - Z_h(s)\Vert_{{\mathbb L}^2}^2\, 
{\rm d}s \Bigr] \\
&&\quad \leq (1+ \tau) \, {\mathbb E}\Bigl[  \Vert e_{n+1}\Vert^2_{{\mathbb L}^2} + 
(1+ \frac{1}{4\tau})   \tau
   \int_{t_n}^{t_{n+1}} \lt(\bigl\Vert \Delta_h \bigl[Y_h(s)-Y_h(t_n)\bigr] - \Pi_h^1[f(s) - f(t_n)]\bigr\Vert^2_{{\mathbb L}^2}\rt)\, {\rm d}s
\Bigr]\, .
\end{eqnarray*}
Note that $\Vert (\mathds{1} - \tau \Delta_h)e_n\Vert^2_{{\mathbb L}^2} = 
\Vert e_n\Vert^2_{{\mathbb L}^2} + 2 \tau \Vert \nabla e_n\Vert^2_{{\mathbb L}^2} + \tau^2 \Vert \Delta_h e_n\Vert^2_{{\mathbb L}^2}$.
Summation over $0 \leq n \leq N-1$ then leads to
\begin{eqnarray}\nonumber
&&{\mathbb E}\Bigl[ \Vert e_0 \Vert^2_{{\mathbb L}^2} + 2 \tau \sum_{n=0}^{N-1}
\Vert \nabla e_n\Vert^2_{{\mathbb L}^2} +
\int_0^T \Vert \overline{Z}_h(s) - Z_h(s)\Vert^2_{{\mathbb L}^2}\, {\rm d}s \Bigr] \\
\label{addi-l1}
&&\quad \leq \tau \sum_{n=0}^{N-1} {\mathbb E}\bigl[ \Vert e_{n+1}\Vert^2_{{\mathbb L}^2}\bigr] + 2 \sum_{n=0}^{N-1} \int_{t_n}^{t_{n+1}} {\mathbb E}\Bigl[\bigl\Vert \Delta_h \bigl[Y_h(s)-Y_h(t_n)\bigr] - \Pi_h^1[f(s) - f(t_n)]\bigr\Vert^2_{{\mathbb L}^2}
\Bigr]\, {\rm d}s\, .
\end{eqnarray}

By the discrete version of Gronwall's inequality, and  Lemma \ref{w0911l2}, (iii), 
the right-hand side is bounded by $C \tau$. Hence, \eqref{w1201e2} is proved.
%

{\bf Step 2.} 
We use the triangle inequality  twice to deduce
\beq
&\sum_{n=0}^{N-1}\me \Bigl[\int_{t_n}^{t_{n+1}}\|Z_h(t)-Z_h^n\|^2_{{\mathbb L}^2}\, {\rm d}t\Bigr]\\
&\qquad \leq 2\sum_{n=0}^{N-1}\int_{t_n}^{t_{n+1}} \me \Bigl[ \|Z_h(t)-\overline{Z}_h(t)\|^2_{{\mathbb L}^2}+ \bigl\| [\overline{Z}_h(t)- Z_h(t)] + [Z_h(t) - Z_h^n] \bigr\|^2_{{\mathbb L}^2}\Bigr] \, {\rm d}t \\
&\qquad \leq 2\sum_{n=0}^{N-1}\int_{t_n}^{t_{n+1}} \me\Bigl[ 3\|Z_h(t)-\overline{Z}_h(t)\|^2_{{\mathbb L}^2} +2\|{Z}_h(t)-Z_h^n\|^2_{{\mathbb L}^2}\Bigl] \, {\rm d}t\, .
\eeq
By the definition of $Z_h^n$,  on taking $\xi = Z_h(t_n)$ in Lemma \ref{conditional} we may further estimate by
\beq
&\quad \leq \sum_{n=0}^{N-1} \int_{t_n}^{t_{n+1}} \me\Bigl[6\|Z_h(t)-\cl Z_h(t)\|^2_{{\mathbb L}^2}+4\|Z_h(t)-Z_h(t_n)\|_{{\mathbb L}^2}^2 \Bigl] \, {\rm d}t\, .
\qquad \qquad 
\eeq
We use \eqref{w1201e2} to bound the first term, and  Lemma \ref{w0913l1} is utilized to bound the last term.
\end{proof}


\section{Strong rates of convergence for a space-time discretization of SLQ}\label{rate}
In this part, we discretize the original problem {\bf SLQ} within two steps, 
starting with its semi-discretization in space (which is referred to as {\bf SLQ}$_{h}$), which is then followed by a
discretization in space and time (which is referred to as {\bf SLQ}$_{h\t}$). Our goal is to prove strong convergence rates in both
cases. By \cite{Lv-Zhang14}, the {\bf SLQ}
problem is uniquely solvable, and its solution $(X^*,U^*)$ may be 
characterized by the following {\bf FBSPDE} with the unique solution
$(X^*, Y, Z, U^*)$, 
\bel{w1205e3}
\lt\{
\bal
&{\rm d} X^*(t)=\bigl(\D X^*(t)+ U^*(t) \bigr){\rm d}t
+\si(t) {\rm d}W(t)\qquad \forall\, t\in (0,T) \, ,\\
&{\rm d} Y(t)=\bigl(-\D Y(t)+ [X^*(t)-\wt X(t)] \bigr){\rm d}t+Z(t){\rm d}W(t) \qquad \forall\,  t\in (0,T) \, ,\\
& X^*(0)=X_0\, ,\qquad Y(T)=-\a \bigl( X^*(T)-\wt X(T) \bigr)\, ,
\eal
\rt.
\ee
with the condition
\bel{w1205e4}
 U^*-Y=0\, .
\ee
We remark that by \eqref{w1205e3}$_1$, $X^*$ may be written as $X^* = {\mathcal S}(U^*)$, 
where 
\beq
{\mathcal S}: L^2_{{\mathbb F}}\bigl(\Omega; L^2(0,T; {{\mathbb L}^2})\bigr) \rightarrow L^2_{{\mathbb F}}\bigl(\Omega; C([0,T]; {\mathbb H}^1_0) \cap
L^2(0,T; {\mathbb H}^2)\bigr)
\eeq
is the bounded  'control-to-state' map. Moreover, we introduce the  reduced functional
$$\widehat{\mathcal J}: L^2_{{\mathbb F}}\bigl(\Omega; L^2(0,T; {{\mathbb L}^2})\bigr) \rightarrow {\mathbb R} \qquad \mbox{via} \qquad \widehat{\mathcal J}(U ) = {\mathcal J}\bigl(\cS(U), U\bigr)\,.$$
The first  component  of the solution to equation \eqref{w1205e3}$_2$ may be written as $Y = {\mathcal T}(X^*)$, where 
$\cT$ 
\beq
\cT: L^2_{{\mathbb F}}\bigl(\Omega; C([0,T]; {\mathbb L}^2)\bigr) \rightarrow L^2_{{\mathbb F}}\bigl(\Omega; C([0,T]; {\mathbb H}^1_0) \cap L^2(0,T; {\mathbb H}^1_0 \cap {\mathbb H}^2)\bigr),
\eeq
which is also bounded.
For every $U \in L^2_{{\mathbb F}}\bigl(\Omega; L^2(0,T; {{\mathbb L}^2})\bigr) $, the Gateaux derivative
$D\h \cJ(U)$  is also a bounded operator on $L^2_{{\mathbb F}}\bigl(\Omega; L^2(0,T; {{\mathbb L}^2})\bigr)$
and takes the form
\begin{equation}\label{derivative-cont}
D \widehat{\mathcal J}(U) = U -{\mathcal T}\bigl({\mathcal S}(U) \bigr)\, .
\end{equation}


\subsection{Problem {\bf SLQ}$_{h}$: Semi-discretization in space}\label{rate-1} 

We begin with a spatial semi-discretization  {\bf SLQ}$_h$ of the problem {\bf SLQ} stated in the introduction, 
which reads:  Find an optimal pair $(X_h^*, U^*_h) \in L^2_{{\mathbb F}}\bigl(\Omega; C([0,T]; {\mathbb V}_h^1) \times L^2(0,T; {\mathbb V}_h^0) \bigr)$ that minimizes the functional ($\alpha \geq 0$)
\begin{equation} \label{w1003e2h}
{{\mathcal J}}(X_h, U_h)=\frac 1 2 {\mathbb E} \Bigl[\int_0^T\lt( \Vert X_h(t) -  \widetilde{X}(t) \Vert_{{\mathbb L}^2}^2+ \Vert U_h(t) \Vert^2_{{\mathbb L}^2}\rt)\, {\rm d}t
+ \alpha \Vert X_h(T) -  \widetilde{X}(T) \Vert^2_{{\mathbb L}^2}\Bigr] 
\end{equation}
subject to the equation
\bel{w1013e1a}
\lt\{
\begin{array}{ll}
{\rm d}X_h(t)=\bigl[\Delta_h X_h(t)+\Pi_h^1 U_h(t)\bigr]\, {\rm d}t+ \Pi_h^1 \si(t) {\rm d}W(t) \q &\forall\, t \in [0,T]\,,\\
X_h(0)= \Pi_h^1 X_0\,.
\end{array}
\rt.
\ee
The existence of a unique optimal pair $(X_h^*, U^*_h)$ follows from \cite{Yong-Zhou99}, as well as its characterization via Pontryagin's maximum principle, i.e.,
\begin{equation}\label{pontr1a} 
0 = U^*_h(t) - \Pi^0_hY_h(t) \qquad \forall\, t \in (0,T)\, ,
\end{equation}
where the adjoint $(Y_h, Z_h) \in L^2_{{\mathbb F}}\bigl( \Omega; C([0,T]; {\mathbb V}_h^1)\bigr) \times L^2_{{\mathbb F}}\bigl( \Omega; L^2(0,T; {\mathbb V}_h^1)\bigr)$ solves the {\bf BSPDE}$_h$
\bel{bshe1a}
\lt\{
\begin{array}{ll}
{\rm d}Y_h(t)= \Bigl[-\Delta_h Y_h(t)+ \bigl[X^*_h(t) - \Pi^1_h\widetilde{X}(t)\bigr] \Bigr]{\rm d}t+Z_h(t) {\rm d}W(t)  \q & \forall\, t \in [0,T]\, ,\\
Y_h(T)= -\alpha \bigl(X^*_h(T)- \Pi^1_h\wt X(T)\bigr)\,.
\end{array}
\rt.
\ee
In \cite{Dunst-Prohl16}, optimal error estimates have been obtained for 
$(X_h^*,Y_h, Z_h)$ with the help of a fixed point argument --- which crucially exploits $T>0$ to be {\em sufficiently small}. The goal in this section is to derive
corresponding estimates for $(X_h^*,U_h^*, Y_h, Z_h)$ for {\em arbitrary} $T>0$ via a variational argument which 
exploits properties of the reduced functional $\widehat{\cJ}
\equiv \widehat{\cJ}(u)$ that is now defined: once an estimate for $\int_0^T {\mathbb E}\bigl[\Vert (U^* - U^*_h)(s)\Vert^2_{{\mathbb L}^2}\bigr]\, {\rm d}s$ has been obtained, 
we use the convergence analysis from Section \ref{error_bspde} to derive estimates for 
$X^* - X^*_h$, as well as $Y - Y_h$ and $Z -Z_h$.

By the unique solvability property of \eqref{w1013e1a}, we associate to this equation the  bounded solution operator
$${\mathcal S}_h: L^2_{{\mathbb F}}\bigl(\Omega; L^2(0,T; \dbV_h^0)\bigr) \rightarrow L^2_{{\mathbb F}}\bigl(\Omega; C([0,T]; {\mathbb V}^1_h)\bigr)\,,$$
which allows to introduce the reduced functional
\begin{equation}\label{red-conv}\widehat{\cJ}_h: L^2_{{\mathbb F}}\bigl(\Omega; L^2(0,T; \dbV_h^0)\bigr) \rightarrow {\mathbb R}\,, \qquad \mbox{via} \qquad
\widehat{\cJ}_h (U_h) = \cJ \bigl({\mathcal S}_h(U_h), U_h\bigr)\, .
\end{equation}
The first solution component to equation \eqref{bshe1a} may be written as $Y_h = {\mathcal T}_h(X^*_h)$, where 
\beq
\cT_h: L^2_{{\mathbb F}}\bigl(\Omega; C([0,T]; {\mathbb V}^1_h) \bigr) \rightarrow L^2_{{\mathbb F}}\bigl(\Omega; C([0,T]; {\mathbb V}^1_h) \bigr)\, .
\eeq
For every $U_h \in L^2_{{\mathbb F}}\bigl(\Omega; L^2(0,T; {{\mathbb V}^0_h})\bigr) $, the Gateaux derivative 
${D\h\cJ_h(U_h)}$ is a bounded operator (uniformly in $h$) on $L^2_{{\mathbb F}}\bigl(\Omega; L^2(0,T; {{\mathbb V}^0_h})\bigr) $  
at $U_h$, and has the form
\begin{equation}\label{derivative-semidisc}
D {\widehat\cJ}_h (U_h)=U_h-\Pi_h^0 {\mathcal T}_h\bigl({\mathcal S}_h (U_h)\bigr)\, .
\end{equation}
Let $U_h \in L^2_{{\mathbb F}}\bigl(\Omega; L^2(0,T; {{\mathbb V}^0_h})\bigr)$ be arbitrary; it is due to the quadratic structure of the reduced functional \eqref{red-conv} that
$$ {\mathbb E} \Bigl[\bigl( D^2 \widehat{\mathcal J}_h(U_h) R_h,R_h \bigr)_{L^2(0,T;\dbL^2)} \Bigr]\geq {\mathbb E}\bigl[\Vert R_h\Vert^2_{L^2(0,T; {\mathbb L}^2)}\bigr] \qquad \forall\, R_h \in 
L^2_\dbF\bigl( \Omega; L^2(0,T; {\mathbb V}^0_h)\bigr)\, .$$
As a consequence, on putting $R_h = U^*_h - \Pi^0_h U^*$,
\bel{esti-control}
\bal
&{\mathbb E}\bigl[ \Vert U^*_h - \Pi^0_h U^*\Vert_{L^2(0,T; {\mathbb L}^2)}^2\bigr]
\leq {\mathbb E}\Bigl[ \Bigl( D^2 \widehat{\mathcal J}_h(U_h) (U^*_h - \Pi^0_h U^*),U^*_h - \Pi^0_h U^*\bigr)_{L^2(0,T;\dbL^2)}\Bigr]\\ 
&\qquad = {\mathbb E}\Bigl[ \bigl( D\widehat{\mathcal J}_h(U^*_h) ,U^*_h - \Pi^0_h U^*\bigr)_{L^2(0,T;\dbL^2)} - 
\bigl( D\widehat{\mathcal J}_h(\Pi^0_h U^*) ,U^*_h - \Pi^0_h U^*\bigr)_{L^2(0,T;\dbL^2)}\Bigr]\, .
\eal
\ee
Note that $D\widehat{\mathcal J}_h(U^*_h) = 0$ by \eqref{pontr1a}, as well as $D \widehat{\mathcal J}(U^*) = 0$ by \eqref{w1205e4}, such that the last line equals
\bel{bound1}
\bal
=& {\mathbb E}\Bigl[ \bigl( D\widehat{\mathcal J}(U^*) ,U^*_h - \Pi^0_h U^*
\bigr)_{L^2(0,T;\dbL^2)} - \bigl( D \widehat{\mathcal J}
(\Pi_h^0 U^*),U^*_h - \Pi^0_h U^* \bigr)_{L^2(0,T;\dbL^2)}\Bigr] \\ 
&+ {\mathbb E} \Bigl[\bigl( D \widehat{\mathcal J}
(\Pi^0_h U^*), U^*_h - \Pi^0_h U^*\bigr)_{L^2(0,T;\dbL^2)} 
- \bigl( D\widehat{\mathcal J}_h(\Pi^0_h U^*) ,U^*_h - \Pi^0_h U^*\bigr)_{L^2(0,T;\dbL^2)}\Bigr]\\
 =:& I + II\, .
\eal
\ee
We use \eqref{derivative-cont} to bound $I$ as follows,
\beq
I =& {\mathbb E}\Bigl[\Bigl( U^* - \Pi^0_h U^* + {\mathcal T}\bigl({\mathcal S}(\Pi^0_h U^*) \bigr) -{\mathcal T}\bigl({\mathcal S}(U^*) \bigr), U^*_h - \Pi_h^0 U^*\Bigr)_{L^2(0,T;\dbL^2)}\Bigr] \\
\leq& \lt(\Bigl( {\mathbb E}\bigl[\Vert U^* - \Pi^0_h U^* \Vert_{L^2(0,T; {\mathbb L}^2)}^2\bigr] \Bigr)^{1/2} + I_a \rt)   \Bigl( {\mathbb E}\bigl[ \Vert U^*_h - \Pi_h^0 U^*\Vert_{L^2(0,T; {\mathbb L}^2)}^2\bigr]\Bigr)^{1/2}\,,
\eeq
where $I^2_a = {\mathbb E}\bigl[ \Vert {\mathcal T}\bigl( {\mathcal S}(U^*) - {\mathcal S}(\Pi^0_h U^*) \bigr)\Vert_{L^2(0,T;{\mathbb L}^2)}^2\bigr]$. By Poincar\'{e}'s inequality,  and a stability bound (see also \eqref{vari-2}) for the backward stochastic heat equation \eqref{bshe}, as well as for the stochastic heat equation \eqref{forw1} (see also \eqref{stochheat1}),
\begin{eqnarray}\nonumber I^2_a &\leq& C  {\mathbb E}\bigl[\Vert \bigl({\mathcal S}(U^*) - {\mathcal S}(\Pi^0_h U^*)\bigr)(T)\Vert^2_{{\mathbb L}^2} +  \Vert {\mathcal S}(U^*) - {\mathcal S}(\Pi^0_h U^*)\Vert^2_{L^2(0,T; {\mathbb L}^2)}\bigr]\\ \label{estima}
&\leq& C {\mathbb E}\bigl[\Vert U^* - \Pi^0_h U^*\Vert^2_{L^2(0,T; {\mathbb L}^2)}\bigr]\, .
\end{eqnarray} 
By optimality condition \eqref{w1205e4}, and the regularity properties of the solution
to {\bf BSPDE} \eqref{bshe}, we know that already
$U^* \in L^2_{\mathbb F}\bigl(\Omega; C([0,T]; {\mathbb H}^1_0) \cap L^2(0,T; {\mathbb H}^1_0 \cap {\mathbb H}^2)\bigr)$; as a consequence, the right-hand side of \eqref{estima} may be bounded by $Ch^2$.

We use the representation \eqref{derivative-semidisc} and properties of the projection
$\Pi_h^0$ to bound $II$ via
\begin{eqnarray*}
II &=& {\mathbb E}\Bigl[ \Bigl( {\mathcal T}\bigl( {\mathcal S}(\Pi_h^0 U^*)\bigr) - \Pi_h^0 {\mathcal T}_h\bigl( {\mathcal S}_h(\Pi_h^0 U^*)\bigr), U^*_h - \Pi^0_h U^*\Bigr)_{L^2(0,T; {\mathbb L}^2)}\Bigr]\\
&\leq& II_a \times
\Bigl({\mathbb E}\bigl[ \Vert U^*_h - \Pi^0_h U^* \Vert^2_{L^2(0,T; {\mathbb L}^2)}\bigr]\Bigr)^{1/2}\,,
\end{eqnarray*}
where $II^2_a:= {\mathbb E}\bigl[ \Vert {\mathcal T}\bigl( {\mathcal S}(\Pi_h^0 U^*)\bigr) - {\mathcal T}_h\bigl( {\mathcal S}_h(\Pi_h^0 U^*)\bigr)\Vert^2_{L^2(0,T; {\mathbb L}^2)}\bigr]$. We split $II^2_a$ into two terms
\begin{eqnarray*} II^2_{a,1} &=& {\mathbb E}\bigl[ \Vert {\mathcal T}\bigl( {\mathcal S}(\Pi_h^0 U^*)\bigr) - {\mathcal T} \bigl( {\mathcal S}_h(\Pi_h^0 U^*)\bigr)\Vert^2_{L^2(0,T; {\mathbb L}^2)}\bigr] \\
\mbox{and} \qquad II^2_{a,2} &=&  {\mathbb E}\bigl[ \Vert {\mathcal T} \bigl( {\mathcal S}_h(\Pi_h^0 U^*)\bigr) - {\mathcal T}_h\bigl( {\mathcal S}_h(\Pi_h^0 U^*)\bigr)\Vert^2_{L^2(0,T; {\mathbb L}^2)}\bigr]\, .
\end{eqnarray*}
In order to bound $II^2_{a,1}$, we use stability properties for {\bf BSPDE} \eqref{bshe}, in combination with the error estimate \eqref{esti-space1} for \eqref{sde} to conclude
$$II^2_{a,1} \leq C {\mathbb E}\bigl[ \bigl\Vert {\mathcal S}(\Pi_h^0 U^*) - {\mathcal S}_h(\Pi_h^0 U^*) \bigr\Vert^2_{L^2(0,T; {\mathbb L}^2)}\bigr] \leq Ch^2\, . $$
In order to bound $II^2_{a,2}$, we use the error estimate in Theorem \ref{w0911t1} for {\bf BSPDE} \eqref{bshe}, in combination
with stability properties of \eqref{sde}, and again the error estimate \eqref{esti-space1} for \eqref{sde} to find
$$II^2_{a,2} \leq C \Bigl({\mathbb E}\bigl[ \bigl \Vert{\mathcal S}(\Pi_h^0 U^*)(T) - {\mathcal S}_h(\Pi_h^0 U^*)(T) \bigr \Vert^2_{{\mathbb L}^2} \bigr] +h^2\Bigr) \leq Ch^2\, .$$
We now insert these estimates into \eqref{bound1} resp.~\eqref{esti-control} to obtain the bound
$${\mathbb E}\bigl[ \Vert U^*_h - \Pi^0_h U^*\Vert_{L^2(0,T; {\mathbb L}^2)}^2\bigr] \leq Ch^2\, . $$ 
By arguing as below \eqref{estima}, this settles part (i) of the following 
%
%

\bt{rate1}
Let $(X^*,Y, Z, U^*)$ be the solution to problem {\bf SLQ}, and $(X^*_h,Y_h, Z_h, U^*_h)$ be the solution to problem {\bf SLQ$_h$}. There exists $C \equiv C(X_0, T)>0 $ such that
\begin{eqnarray*}
{\rm (i)} && \me \Bigl[\int_0^T\|U^*(t)-U^*_h(t)\|_{\dbL^2}^2\, {\rm d} t\Bigr] \leq C h^2\, ; \\ 
{\rm (ii)} &&  \sup_{0 \leq t \leq T} {\mathbb E}\Bigl[ \|X^*(t)-X_h^*(t)\|_{\dbL^2}^2   + \int_0^T {\mathbb E}\Bigl[ \|X^*(t)-X_h^*(t)\|_{\dbH_0^1}^2\Bigr]\rd t \leq Ch^2\,;\\
{\rm (iii)} &&
  \sup_{0 \leq t \leq T} \me \bigl[ \|Y(t)-Y_h(t)\|_{\dbL^2}^2 \bigr]+  \int_0^T {\mathbb E}\Bigl[ \|Y(t)-Y_h(t)\|_{\dbH_0^1}^2+\|Z(t)-Z_h(t)\|_{\dbL^2}^2 \Bigr] \rd t \leq Ch^2\, .
\end{eqnarray*}

\et

\begin{proof}
Since $U^* \in L^2_{\dbF}(\O;L^2(0,T;\dbH_0^1))$, and (i),
the first estimate of (ii) can be deduced as \eqref{esti-space1}.
Assertion (iii) now follows accordingly as  Theorem \ref{w0911t1}, thanks to (ii).
\end{proof}

\subsection{Problem {\bf SLQ}$_{h\tau}$: Discretization in space and time}\label{rate-2}

In this part, we provide the temporal discretization  of problem {\bf SLQ$_h$} which was analyzed in Section \ref{rate-1}. For this purpose, we use
a mesh $I_\tau$ covering $[0,T]$, and consider processes $(X_{h\tau}, U_{h\tau}) \in {\mathbb X}_{h\t} \times {\mathbb U}_{h\t}\subset L^2_\dbF\bigl(\O;L^2(0,T;\dbV_h^1)\bigr)\times L^2_\dbF\bigl(\O;L^2(0,T;\dbV_h^0)\bigr)$, where
 \begin{eqnarray*}
 {\mathbb X}_{h\t} &:=& \lt\{X \in L^2_\dbF\bigl(\O;L^2(0,T;\dbV_h^1)\bigr): \ X(t)=X(t_n), \,\,   \forall t\in [t_n, t_{n+1}),  \,\, n=0,1,\cds, \, N-1\rt\}\, ,\\
{\mathbb U}_{h\t} &:=& \lt\{U \in L^2_\dbF\bigl(\O;L^2(0,T;\dbV_h^0) \bigr):\ U(t)=U(t_n),  \,\,    \forall t\in [t_n, t_{n+1}),  \,\, n=0,1,\cds, \, N-1\rt\}\, ,
\end{eqnarray*}
and for any $X\in \dbX_{h\t},\,U\in \dbU_{h\t}$,
\beq
\|X\|_{\dbX_{h\t}}:=\bigg(\t \sum_{n=1}^N\me\big[\|X(t_n)\|_{\dbL^2}^2\big]\bigg)^{1/2},\,\,
\|U\|_{\dbU_{h\t}}:=\bigg(\t \sum_{n=0}^{N-1}\me\big[\|U(t_n)\|_{\dbL^2}^2\big]\bigg)^{1/2}.
\eeq
Problem {\bf SLQ$_{h\t}$} then reads as follows: Find an optimal pair 
$(X_{h\t}^*,U_{h\t}^*)\in {\mathbb X}_{h\t} \times {\mathbb U}_{h\t}$ which minimizes the cost functional 
\bel{w1003e8}
\bal
&\cJ_{\t}(X_{h\t}, U_{h\t})\\
&=\frac {\t} 2\sum_{n= 1}^N \me\bigl[ \Vert X_{h\t}(t_n)- \wt X(t_n) \Vert_{\dbL^2}^2\bigr]
+\frac {\t} 2\sum_{n=0}^{N-1} \me \bigl[\Vert U_{h\t}(t_n) \Vert_{\dbL^2}^2\bigr]+\frac\a 2 \me \bigl[\| X_{h\t}(T)-\wt X(T) \|_{\dbL^2}^2\bigr]\, ,
\eal
\ee
subject to the difference equation
\bel{w1003e7}
\lt\{
\bal
& X_{h\t}(t_{n+1})-X_{h\t}(t_n)= \tau \bigl[\D_h X_{h\t}(t_{n+1})+
{\Pi_h^1U_{h\t}(t_n)} \bigr]+\Pi_h^1\si(t_n) \D_{n+1}W \\
 &\qq\qq\qq\qq\qq\qq\qq n=0,1,\cds,N-1\, ,\\
& X_{h\t}(0)= {\Pi_h^1 X_0}\,,
\eal
\rt.
\ee
where $\Delta_{n+1} W = W(t_{n+1}) - W(t_n)$.
The following result states the Pontryagin maximum principle for problem {\bf SLQ$_{h\tau}$}, which is later used to verify
convergence rates for the solution to problem {\bf SLQ$_{h\t}$} towards
the solution to  {\bf SLQ}.

\bt{MP}
Problem {\bf SLQ$_{h\t}$} admits a unique minimizer $(X^*_{h\t}, U^*_{h\t})\in {\mathbb X}_{h\tau} \times {\mathbb U}_{h\tau}$, which is (part of) the unique solution $$(X^*_{h\tau}, Y_{h\tau}, U^*_{h\tau}) \in \bigl[{\mathbb X}_{h\tau} \bigr]^2  \times {\mathbb U}_{h\tau}$$ to the following 
forward-backward difference equation for
$0 \leq n \leq N-1$,
{\bel{w1212e3}
\lt\{
\bal
& [\mathds{1} - \tau \Delta_h]X^*_{h\t}(t_{n+1})= X^*_{h\t}(t_n)+ \tau \Pi_h^1U^*_{h\t}(t_n) +\Pi_h^1\si(t_n) \D_{n+1}W\, ,\\
&[\mathds{1} - \tau \Delta_h]Y_{h\t}(t_n) = {\mathbb E}\lt[ Y_{h\t}(t_{n+1})- {\tau} \bigl(X^*_{h\t}(t_{n+1})-\Pi_h^1\wt X(t_{n+1})\bigr)\bigl\vert
{\mathcal F}_{t_n}\rt]\, , \\
&X^*_{h\t}(0)=\Pi_h^1 X_0\, ,\qquad Y_{h\t}(T)=-\a\bigl(X^*_{h\t}(T)-\Pi_h^1\wt X(T)\bigr)\, ,
\eal
\rt.
\ee}
together with 
\bel{w1003e12}
U^*_{h\t}(t_n)-\Pi_h^0Y_{h\t}(t_n)=0 \qquad  n=0,1,\cds,N-1 \, .
\ee
%
\et

By \eqref{w1003e12}, we can see that $U_{h\t}^*$ is c\`adl\`ag, and then $U_{h\t}^*\in \dbU_{h\t}$.
Inserting \eqref{w1003e12} into \eqref{w1212e3}$_1$ leads to a coupled problem for $\lt( \lt\{ X^*_{h\tau}(t_{n+1})\rt\}_{n=0}^{N-1}, \lt\{ Y_{h\tau}(t_{n}) \rt\}_{n=0}^{N-1}\rt)\,,$ where \eqref{w1212e3}$_2$ is similar to
\eqref{w0911e10}. Note that no $Z$-component appears explicitly in \eqref{w1212e3}$_2$, where the conditional 
expectation is used to compute the $Y$-component. It is in particular due to the need to compute conditional expectations in \eqref{w1212e3}$_2$ that the optimality system \eqref{w1212e3}--\eqref{w1003e12} is still not amenable to an actual implementation, but serves as a key step towards a practical method which approximately solves {\bf SLQ}$_{h\t}$ --- which is proposed and studied in Section \ref{numopt}.
\begin{proof}
We divide the proof into three steps.
\ms

\no{\bf Step 1.} Let $\ds A_0 :=\lt(\mathds{1}-\tau \D_h\rt)^{-1}$.
For any $U_{h\t} \in {\mathbb U}_{h\t}$, by equation \eqref{w1212e3}$_1$, we 
have
\begin{equation}\label{w1003e13}
X_{h\t}(t_n)=A_0\Bigl(X_{h\t}(t_{n-1})+\tau \Pi_h^1 U_{h\t}(t_{n-1}) +\Pi_h^1\si(t_{n-1})\D_nW\Bigr)\, .
\end{equation}
Hence, by iteration  we arrive at
\begin{eqnarray}\nonumber
X_{h\t}(t_n)&=&A_0^nX_{h\t}({0})+\tau \sum_{j=0}^{n-1}A_0^{n-j}\Pi_h^1U_{h\t}(t_j) +\sum_{j=1}^nA_0^{n+1-j}\Pi_h^1\si(t_{j-1})\D_jW\\ \label{rep1}
&=:& (\G {\Pi_h^1 X_0})(t_{n})+(LU_{h\t} )(t_{n})+f(t_n)\, .
\end{eqnarray}
Here, $\G : \dbV_h^1\rightarrow \dbX_{h\t}$ and $L: \dbU_{h\t}\rightarrow \dbX_{h\t}$ are bounded operators.
Below, we use the abbreviations
\begin{equation}\label{w1003e14}
\h \G {\Pi_h^1 X_0} := \G{\Pi_h^1 X_0}(T)\, , 
\qquad \h L{U_{h\tau}} := (LU_{h\tau})(T)\,, \qquad  \h f=f(T)\, .
\end{equation}

\ms

{\bf Claim}: For any $\xi\in {\mathbb X}_{h\t}$, and any $\eta\in L^2_{\mf_T}(\O;\dbV_h^1)$,
\bel{w1003e01}
L^*\xi=-\Pi_h^0Y_0\,,\qquad \h L^* \eta= -\Pi_h^0 Y_1\, ,
\ee
where $(Y_0,Z_0)$ solves the following backward stochastic difference equation:
\bel{w1206e1}
\lt\{
\bal
&Y_0(t_{n+1})-Y_0(t_n)= \tau \bigl(-\D_hY_0(t_n)+\xi(t_{n+1}) \bigr) +\int_{t_n}^{t_{n+1}}Z_0(t)\rd W(t) \, \quad n=0,1,\cds, N-1\, ,\\
&Y_0(t_N)=Y_0(T)=0\, ,
\eal
\rt.
\ee
and  $(Y_1,Z_1)$ solves
\beq
\lt\{
\bal
&Y_1(t_{n+1})-Y_1(t_n)=-\tau \D_hY_1(t_n)+\int_{t_n}^{t_{n+1}}Z_1(t)\rd W(t)\qquad n=0,1,\cds, N-1\, ,\\
&Y_1(T)=-\eta\, .
\eal
\rt.
\eeq

{\em Proof of Claim:} The existence and the uniqueness of solutions to \eqref{w1206e1} are obvious.
Note that
\begin{equation}\label{w1003e15}
Y_0(t_j)=\me \bigl[A_0Y_0(t_{j+1})- \tau A_0\xi(t_{j+1}) \big|\mf_{t_j} \bigr].
\end{equation}
With the similar procedure as that in 
\eqref{w1003e13}, we conclude from \eqref{w1003e15} and \eqref{w1206e1}$_2$,
\bel{w1003e15a}
\bal
Y_0(t_j)
=&\me \bigl[ A_0^{N-j}Y_0(t_N)\big|\mf_{t_j} \bigr]-  \me \bigl[ \tau \sum_{k=j+1}^NA_0^{k-j}\xi(t_k)\big|\mf_{t_j} \bigr]\\
=&- \me\Bigl[ \tau \sum_{k=j+1}^NA_0^{k-j}\xi(t_k)  \big|\mf_{t_j} \Bigr]\, .
\eal
\ee

{Let $U_{h\t} \in {\mathbb U}_{h\t}$ be arbitrary}. By the definition of $L$, \eqref{w1003e15a} and the fact 
$A_0=A_0^\top$, we can calculate that
\begin{eqnarray*}
&&\tau \sum_{n=1}^N \me \Bigl[\bigl( (LU_{h\t})(t_n),\xi(t_n)  \bigr)_{ {\mathbb L}^2}\Bigr] 
=\tau \sum_{n=1}^N \me \Bigl[ \bigl( \tau \sum_{j=0}^{n-1}A_0^{n-j}\Pi_h^1 U_{h\t}(t_j),\xi(t_n)  \bigr)_{{\mathbb L}^2}\Bigr] \\
&&\qquad {  =} \tau \sum_{j=0}^{N-1} \me\Bigl[\Bigl( {\Pi_h^1U_{h\t}(t_j)},  \me\bigl[\tau \sum_{n=j+1}^NA_0^{n-j}\xi(t_n) \big|\mf_{t_j} \bigr]  \Bigr)_{{\mathbb L}^2}\Bigr] \, .
\end{eqnarray*}
{Since the second argument is ${\mathbb V}^1_h$-valued,} we may skip the projection operator in the first argument, and may continue instead
\begin{eqnarray*}
&&\qquad = \tau \sum_{j=0}^{N-1} \me\Bigl[ \Bigl( {U_{h\t}(t_j)}, { \Pi_h^0}\me \bigl[\tau \sum_{k=j+1}^NA_0^{k-j}\xi(t_k) \big|\mf_{t_j} \bigr]\Bigr)_{{\mathbb L}^2}  \Bigr] \, .\qquad \qquad \quad  
\end{eqnarray*}
Because of \eqref{w1003e15a}, the latter equals
$$=\tau \sum_{j=0}^{N-1} \me \Bigl[ \bigl( U_{h\tau} (t_j), -\Pi_h^0Y_0(t_j) 
\bigr)_{{\mathbb L}^2}\Bigr]\,,\qquad \qquad \qquad \qquad \qquad \quad $$
which is the first part of the claim. 

The remaining part can be deduced from the fact that $Y_1(t_j)=-\me\bigl[ A_0^{N-j}\eta\big|\mf_{t_j}\bigr]$ for $j=0,1,\cds,N-1$,  and the following calculation:
\bel{w1003e22}
\bal
\me\bigl[ \big(\h LU_{h\t},\eta\big)_{{\mathbb L}^2}\bigr]
=&\me \Bigl[ \Bigl( \tau \sum_{j=0}^{N-1}A_0^{N-j}\Pi_h^1U_{h\t}(t_j) ,\eta \Bigr)_{{\mathbb L}^2}\Bigr]\\
=&\t \sum_{j=0}^{N-1}\me\Bigl[\Bigl( \Pi_h^1U_{h\t}(t_j),\me \bigl[ A_0^{N-j}\eta \big|\mf_{t_j}\bigr]\Bigr)_{{\mathbb L}^2}\Bigr] \\
=&\tau \sum_{j=0}^{N-1}\me \Bigl[\Bigl( U_{h\t}(t_j), -\Pi_h^0Y_1(t_j)\Bigr)_{{\mathbb L}^2}\Bigr]\qquad
\forall\, {U_{h\tau} \in {\mathbb U}_{h\tau}}\, .
\eal
\ee

\no{\bf Step 2.} By \eqref{rep1} and \eqref{w1003e14}, we can rewrite $\cJ_{\t}(X_{h\t} ,U_{h\t} )$ as follows:
\begin{eqnarray*}
&&\cJ_{\t}(X_{h\t} ,U_{h\t} )\\
&&\quad =\frac 1 2\lt[ \|X_{h\t} - \Pi_\t{\wt X} \|_{L^2_{{\mathbb F}}(\Omega; L^2(0,T; {\mathbb L}^2))}^2+\|U_{h\t}\|_{L^2_{{\mathbb F}}(\Omega; L^2(0,T; {\mathbb L}^2))}^2+\a\| {X_{h\t}(T)}-{\wt X}(T)\|_{L^2_{{\mathcal F}_T }(\O;\dbL^2)}^2\rt]\\
&&\quad =\frac 1 2\bigg[ \Bigl( \G {\Pi_h^1 X_0} +LU_{h\t}  +f  -\Pi_\t {\wt X},   \G {\Pi_h^1 X_0}+LU_{h\t} +f  -\Pi_\t{\wt X} \Bigr)_{L^2_{{\mathbb F}}(\Omega; L^2(0,T; {\mathbb L}^2))} \\
&&\qq\qq + ( U_{h\t} ,U_{h\t})_{L^2_{{\mathbb F}}(\Omega; L^2(0,T; {\mathbb L}^2))}\\
 &&\qq\qq
   +\a\Bigl( \h\G {\Pi_h^1 X_0} +\h LU_{h\t}+\h f- {\wt X(T)}, \h\G { \Pi_h^1 X_0}+\h LU_{h\t}+\h f- {\wt X(T)}  \Bigr)_{L^2_{{\mathcal F}_T}(\Omega; {\mathbb L}^2)}\bigg]\, ,
   \end{eqnarray*}
where $\Pi_\t\wt X(t)=\wt X(t_n)$, for $t\in [t_n,t_{n+1})$, $n=0,1,\cds,N-1$.
Rearranging terms then leads to
   \beq
=& \frac 1 2 \bigg[  \Bigl(\bigl[\mathds{1} +L^*L+\a\h L^*\h L \bigr]U_{h\t} ,U_{h\t}  \Bigr)_{L^2_{{\mathbb F}}(\Omega;L^2(0,T;{\mathbb L}^2))}  \\
 &       +2 \Bigl(\bigl[L^*\G+\a\h L^*\h\G \bigr] {\Pi^1_h X_0} +L^*f +\a\h L^*\h f-L^*\Pi_\t{\wt X} -\a\h L^*{\wt X(T)}  ,U_{h\t} \Bigr)_{L^2_{{\mathbb F}}(\Omega;L^2(0,T;{\mathbb L}^2))}\\
   &\q  +\Big\{ \Bigl( \G {\Pi_h^1 X_0} +f -\Pi_\t {\wt X}, \G {\Pi_h^1 X_0} +f -\Pi_\t{\wt X} \Bigr)_{L^2_{\mathbb F}(\Omega;L^2(0,T; {\mathbb L}^2))}\\
  &\qq +\a \Bigl( \h\G {\Pi_h^1 X_0}+\h f-{\wt X}(T)  , \h\G \Pi_h^1X_{0}+\h f-{\wt X}(T)  \Bigr)_{L^2_{{\mathcal F}_T}(\O;\dbL^2)} \Big\}     \bigg]\\
=:& \frac 1 2\Big[ \bigl( NU_{h\t} ,U_{h\t} \bigr)_{L^2_{{\mathbb F}}(\Omega;L^2(0,T;{\mathbb L}^2))} +2 \bigl( H({\Pi_h^1 X_0},f,\wt X),U_{h\t}\bigr)_{L^2_{{\mathbb F}}(\Omega;L^2(0,T;{\mathbb L}^2))}
    + M(\Pi_h^1X_{0},f,\wt X)  \Big]\, .    
\eeq
Since $N  =\mathds{1} +L^*L+\a\h L^*\h L $ is positive {definite}, there exists a unique $U^*_{h\t} \in {\mathbb U}_{h\t}$ such that
\beq
NU^*_{h\t} +H({\Pi_h^1 X_0},f,\wt X) =0\, .
\eeq
Therefore, for any $U_{h\t} \in {\mathbb U}_{h\t}$ such that $U_{h\t} \neq U^*_{h\t}$, 
\begin{eqnarray*}
&&\cJ_{\t}(X_{h\t},U_{h\t})-\cJ_{\t}(X^*_{h\t}, U^*_{h\t})\\
&&\quad = \Bigl( NU^*_{h\t}+H({\Pi_h^1 X_0},f,\wt X), U_{h\t}-U^*_{h\t}\Bigr)_{L^2_{\mathbb F}(\Omega; L^2(0,T; {\mathbb L}^2))}\\
&&\qquad
+\frac 1 2 \Bigl( N(U_{h\t}-U^*_{h\t} ), U_{h\t}-U^*_{h\t}\Bigr)_{L^2_{\mathbb F}(\Omega; L^2(0,T; {\mathbb L}^2))}\\
&&\quad =\frac 1 2 \Bigl( N(U_{h\t}-U^*_{h\t}), U_{h\t}-U^*_{h\t}\Bigr)_{L^2_{\mathbb F}(\Omega; L^2(0,T; {\mathbb L}^2))}\\
&&\quad >0\,,
\end{eqnarray*}
which means that $U^*_{h\t}$ is the unique optimal control, and $(X^*_{h\t},U^*_{h\t})$ is the unique optimal pair.

\no{\bf Step 3.} By the definition of $N, H,L^*,\h L^*$,  and properties \eqref{w1003e01} and
 \eqref{rep1}, we can get
\beq
0=&N U^*_{h\t} +H({\Pi_h^1 X_0},f,\wt X)\\
=&U^*_{h\t} +L^*\lt(\G {\Pi_h^1 X_0} +L U^*_{h\t} +f -{\wt X}\rt)+\a\h L^*\lt(\h\G {\Pi_h^1 X_0} +\h L U^*_{h\t} +\h f-{\wt X}(T)\rt)\\
=&U^*_{h\t} -\Pi_h^0\lt[Y_0\bigl(\cd;X^*_{h\t} -{\wt X} \bigr)+Y_1\lt(\cd;\a\bigl(X^*_{h\t}(T)- {\wt X}(T)\bigr)\rt)\rt]\\
=&U^*_{h\t} -\Pi_h^0 Y_{h\t}\,,\\
\eeq
which is \eqref{w1003e12}. This completes the proof.
\end{proof}

We are now ready to verify strong rates of convergence for the solution to {\bf SLQ$_{h\t}$}; it is as in Section \ref{rate-1} that the reduced cost functional
$\h\cJ_{h\t}: {\mathbb U}_{h\t} \rightarrow {\mathbb R}$ is used, which is defined via
\beq
\h\cJ_{h\t}(U_{h\t})=\cJ_{\t} \bigl(\cS_{h\t}(U_{h\t}),U_{h\t}\bigr)\, ,
\eeq
where $\cS_{h\t}: {\mathbb U}_{h\tau} \rightarrow {\mathbb X}_{h\tau}$ is the solution operator to the forward equation \eqref{w1212e3}$_1$. Moreover, we use
the solution operator $\cT_{h\t}: {\mathbb X}_{h\tau} \rightarrow {\mathbb X}_{h\tau}$ for the first solution component of the backward equation \eqref{w1212e3}$_2$.

\bt{rate2}
Suppose that $X(0)\in \dbH_0^1\cap\dbH^2$, and
\bel{w1212e2}
\sum_{n=0}^{N-1}\int_{t_n}^{t_{n+1}}\me\big[\lt\|\si(t)-\si(t_n)\rt\|_{\dbH_0^1}^2\big]+\lt\|\wt X(t)-\wt X(t_n)\rt\|_{\dbL^2}^2
+\lt\|\wt X(t)-\wt X(t_{n+1})\rt\|_{\dbL^2}^2 \rd t\leq C\t\, .
\ee
Let $(X_h^*,Y_h, Z_h, U^*_h)$ be the solution to problem {\bf SLQ$_h$}, and $(X^*_{h\t},Y_{h\t}, U^*_{h\t})$ be the solution to problem {\bf SLQ$_{h\t}$}. There exists $C \equiv C(X_0, T)>0$ such that
\begin{eqnarray*}
{\rm (i)} && \sum_{k=0}^{N-1}\me \Bigl[\int_{t_k}^{t_{k+1}}\lt\|U^*_h(t)-U^*_{h\t}(t_k)\rt\|_{\dbL^2}^2\, {\rm d} t\Bigr] \leq C \tau\, ; \\ 
{\rm (ii)}&& \max_{0 \leq k \leq N}  {\mathbb E}\bigl[\|X_h^*(t_k)-X_{h\t}^*(t_k)\|_{\dbL^2}^2\bigr]+ {\mathbb E}\Bigl[\tau \sum_{k=1}^N   \|X_h^*(t_k)-X^*_{h\t}(t_k)\|_{\dbH_0^1}^2\Bigr] \leq C\t\, ;\\ 
{\rm (iii)}&&\max_{0 \leq k \leq N} {\mathbb E}\bigl[\|Y_h(t_k)-Y_{h\t}(t_k)\|_{\dbL^2}^2 \bigr]+ {\mathbb E}\Bigl[\tau \sum_{k=0}^{N-1}  \|Y_h(t_k)-Y_{h\t}(t_k)\|_{\dbH_0^1}^2\Bigr] \leq C\t\, .
\end{eqnarray*}
\et

\begin{proof} We divide the proof into three steps.

\no {\bf Step 1.}
We  follow the argumentation in the proof of Theorem \ref{rate1}.
For every $U_{h\t},\,R_{h\t} \in {\mathbb U}_{h\t}$, the first Gateaux derivative $D\h\cJ_{h\t}(U_{h\t})$, and the second Gateaux derivative
$D^2\h\cJ_{h\t}(U_{h\t})$ satisfy
\bel{w1206e3}
\bal
& D\h\cJ_{h\t}\lt(U_{h\t}\rt)=U_{h\t}-\Pi_h^0\cT_{h\t}\bigl(\cS_{h\t}\lt(U_{h\t}\rt)\bigr)\, ,\\
&  {\mathbb E}\bigl[ \bigl( D^2 \widehat{\mathcal J}_{h\t}(U_{h\t}) R_{h\t},R_{h\t} \bigr)_{L^2(0,T; {\mathbb L}^2)} \bigr]\geq {\mathbb E}\bigl[\Vert R_{h\t}\Vert^2_{L^2(0,T; {\mathbb L}^2)}\bigr]\, .
\eal
\ee
Define the (piecewise constant) operator ${\Pi_\t}: L^2_\dbF\bigl(\O;C(0,T; \dbV_h^0)\bigr)\rightarrow {\mathbb U}_{h\t}$ by
$$ \Pi_\t U_h(t):=U_h(t_n)  \qquad \forall\, t \in [t_n,t_{n+1}) \qquad n=0,1,\cds, N-1\, .$$ 
By putting $R_{h\t} = U^*_{h\t} - \Pi_\t U^*_h$ in \eqref{w1206e3}, and applying the fact
$D\widehat{\mathcal J}_{h\t}(U^*_{h\t})=D\h\cJ_h(U^*_h) = 0$, we see that
\begin{eqnarray}\nonumber
&& {\mathbb E}\bigl[\lt\| U^*_{h\t} - {\Pi_\t U^*_h}\rt\|_{L^2(0,T; {\mathbb L}^2)}^2\bigr]\\ \nonumber
&&\quad 
\leq  {\mathbb E}\Bigl[ \bigl( D\widehat{\mathcal J}_{h\t}(U^*_{h\t}) ,U^*_{h\t} - \Pi_\t U^*_h\bigr)_{{L^2(0,T; {\mathbb L}^2)}} - 
\bigl( D\widehat{\mathcal J}_{h\t}(\Pi_\t U^*_h) ,U^*_{h\t} - \Pi_\t U^*_h\bigr)_{{L^2(0,T; {\mathbb L}^2)}}\Bigr]\\ \label{w1206e4}
&&\quad = {\mathbb E}\Bigl[ \bigl(  D\widehat{\mathcal J}_h(U^*_h)-  D\widehat{\mathcal J}_{h}(\Pi_\t U^*_h),U^*_{h\t} - \Pi_\t U^*_h\bigr)_{L^2(0,T; {\mathbb L}^2)}\Bigr] \\ \nonumber
&&\qquad + {\mathbb E}\Bigl[\bigl(  D\widehat{\mathcal J}_{h}(\Pi_\t U^*_h)- D\widehat{\mathcal J}_{h\t}(\Pi_\t U^*_h) ,U^*_{h\t} - \Pi_\t U^*_h
\bigr)_{L^2(0,T; {\mathbb L}^2)}\Bigr]\, .\\ \nonumber
\end{eqnarray}
Therefore,
\begin{eqnarray}\nonumber
&&{\mathbb E}\bigl[\lt\| U^*_{h\t} - {\Pi_\t U^*_h}\rt\|_{L^2(0,T; {\mathbb L}^2)}^2\bigr]\\ \label{w1212e1}
&&\quad \leq 2 \me \bigl[ \Vert  D\widehat{\mathcal J}_h(U^*_h)-  D\widehat{\mathcal J}_{h}(\Pi_\t U^*_h) \Vert_{L^2(0,T;\dbL^2)}^2\bigr] +2\me\bigl[ \Vert D\widehat{\mathcal J}_{h}(\Pi_\t U^*_h)- D\widehat{\mathcal J}_{h\t}(\Pi_\t U^*_h)
\Vert_{L^2(0,T;\dbL^2)}^2\bigr]\\ \nonumber
&&\quad =: 2I'+2II'\, .
\end{eqnarray}
We use \eqref{derivative-semidisc} and \eqref{pontr1a}, and stability properties of the projection 
$\Pi_h^0$
 to bound $I'$ as follows,
\bel{w1206e5}
\bal
I' =& {\mathbb E}\Bigl[ \lt\| U^*_h - \Pi_\t U^*_h +  \Pi_h^0{\mathcal T}_h \bigl({\mathcal S}_h(\Pi_\t U^*_h) \bigr) - \Pi_h^0{\mathcal T}_h\bigl({\mathcal S}_h(U^*_h) \bigr)\rt\|_{L^2(0,T; {\mathbb L}^2)}^2\Bigr] \\
\leq&2  {\mathbb E}\Bigl[\lt\| U^*_h - \Pi_\t U^*_h\rt\|_{L^2(0,T; {\mathbb L}^2)}^2+\lt\|{\mathcal T}_h \bigl({\mathcal S}_h(\Pi_\t U^*_h) \bigr) -{\mathcal T}_h\bigl({\mathcal S}_h(U^*_h) \bigr) \rt\|_{L^2(0,T; {\mathbb L}^2)}^2\Bigr]\, .
\eal
\ee
 By  stability properties of solutions to {\bf BSPDE}$_h$ \eqref{bsde}, and the discretization \eqref{sde} of 
 {\bf BSPDE}, we obtain
  \begin{eqnarray}\nonumber
 && {\mathbb E}\Bigl[\lt\|{\mathcal T}_h \bigl({\mathcal S}_h(\Pi_\t U^*_h) \bigr) -{\mathcal T}_h\bigl({\mathcal S}_h(U^*_h) \bigr) \rt\|_{L^2(0,T; {\mathbb L}^2)}^2\Bigr]\\ \label{w1206e6}
 &&\quad \leq  C  \me \Bigl[\lt\| \bigl(\cS_h(U^*_h)-\cS_h(\Pi_\t U^*_h)\bigr)(T) \rt\|^2_{\dbL^2}+ \lt\| \cS_h(U^*_h)-\cS_h(\Pi_\t U^*_h) \rt\|^2_{L^2(0,T;\dbL^2)} \Bigr] \\
 \nonumber
 &&\quad \leq   C \lt\| U^*_h-\Pi_\t U^*_h \rt\|^2_{L^2_\dbF(\O;L^2(0,T;\dbL^2))}\, .
 \end{eqnarray}
By the optimality condition \eqref{pontr1a}, estimate \eqref{bspde19}, and Theorem \ref{rate1} (i)
%
we have
\begin{eqnarray}\nonumber
&& \lt\| U^*_h-\Pi_\t U^*_h \rt\|^2_{L^2_\dbF(\O;L^2(0,T; {\mathbb L}^2))} \\
\label{w1206e7}
&&\quad  \leq C\sum_{k=0}^{N-1}\int_{t_k}^{t_{k+1}}\int_{ t_k}^t \me \Bigl[ \t \bigl\Vert  -\D_h Y_h(s)+ \bigl(X^*_h(s)-\Pi_h^1\wt X(s)  \bigr)\bigr\Vert_{\dbL^2}^2 +\ \Vert Z_h(s) \Vert^2_{\dbL^2}\Bigr]  \rd s \rd t\\ \nonumber
&&\quad  \leq C\t \int_0^T \me \Bigl[ \lt\|-\D_h Y_h(s)+\bigl(X^*_h(s)-\Pi_h^1\wt X(s)  \bigr)\rt\|_{\dbL^2}^2 +\lt\| Z_h(s) \rt\|_{\dbL^2}^2 \Bigr] \rd s \\ \nonumber
&&\quad \leq  C \t\, . 
\end{eqnarray}

Next, we turn to $II'$, for which we use the representations \eqref{derivative-semidisc}, \eqref{w1206e3} and the stability property of $\Pi_h^0$ to conclude
\beq
II' =&
\me  \bigl[\lt\|   \Pi_h^0 {\mathcal T}_h \bigl({\mathcal S}_h(\Pi_\t U^*_h) \bigr) - 
                \Pi_h^0{\mathcal T}_{h\t} \bigl({\mathcal S}_{h\t}(\Pi_\t U^*_h)\bigr)  \rt\|^2_{ L^2(0,T; {\mathbb L}^2)} \bigr] \\
\leq &2\me \bigl[\lt\| {\mathcal T}_h \bigl({\mathcal S}_{h}(\Pi_\t U^*_h) \bigr) -  {\mathcal T}_h \bigl({\mathcal S}_{h\t}(\Pi_\t U^*_h) \bigr) \rt\|^2_{L^2(0,T; {\mathbb L}^2)} \\
  &+\lt\|  {\mathcal T}_h \bigl({\mathcal S}_{h\t}(\Pi_\t U^*_h) \bigr) -{\mathcal T}_{h\t} \bigl({\mathcal S}_{h\t}(\Pi_\t U^*_h)  \rt\|^2_{L^2(0,T; {\mathbb L}^2)}\bigr]\\ 
=: &2\lt( II'_{a,1}+II'_{a,2}\rt) \,. \\
\eeq
In order to bound $II'_{a,1}$, we use stability properties 
for {\bf SPDE}$_h$ \eqref{sde}, {\bf BSPDE}$_h$ \eqref{bshe1a}, in combination with the error estimate \eqref{euler1} for \eqref{sde} to conclude
$$II'_{a,1} \leq C \me  \bigl[ \lt\Vert\lt( {\mathcal S}_h(\Pi_\t U^*_h) - {\mathcal S}_{h\t}(\Pi_\t U^*_h)\rt)(T)\rt\Vert_{ \dbL^2}^2+\Vert {\mathcal S}_h(\Pi_\t U^*_h) - {\mathcal S}_{h\t}(\Pi_\t U^*_h)\Vert_{L^2(0,T; {\mathbb L}^2)}^2 \bigr]  \leq C\t\, . $$
To bound $II'_{a,2}$, it is easy to see
\beq
\bal
II'_{a,2}\leq&  2\sum_{n=0}^{N-1}\me\lt[\int_{t_n}^{t_{n+1}}\lt\|Y_h(t;\cS_{h\t}(\Pi_\t U^*_h))-Y_h(t_n;\cS_{h\t}(\Pi_\t U^*_h))\rt\|^2_{\dbL^2} \rd t\rt]\\
 &+2T \max_{0\leq n\leq N}\me \bigl[\|Y_h(t_n;\cS_{h\t}(\Pi_\t U^*_h))-Y_h^n(\cS_{h\t}(\Pi_\t U^*_h))\|_{{\mathbb L}^2}^2\bigr].
\eal
\eeq
By \eqref{derivative-semidisc}, Lemma \ref{w0911l2} (i), stable property of \eqref{w1212e3}$_1$, we can get
\beq
&\sum_{n=0}^{N-1}\me\lt[\int_{t_n}^{t_{n+1}}\lt\|Y_h(t;\cS_{h\t}(\Pi_\t U^*_h))-Y_h(t_n;\cS_{h\t}(\Pi_\t U^*_h))\rt\|^2_{\dbL^2} \rd t\rt]\\
&\quad \leq C\t\lt(\me\|\cS_{h\t}(\Pi_\t U^*_h)(T)\|_{\dbH_0^1}^2+\|\wt X(T)\|_{\dbH_0^1}^2+\|\cS_{h\t}(\Pi_\t U^*_h)\|^2_{L^2_{\dbF}(\O;L^2(0,T;\dbL^2))}+\|\wt X\|^2_{L^2(0,T;\dbL^2)}\rt)\\
&\q \leq C\t\lt(\| X_0\|_{\dbH_0^1}^2+\|\wt X(T)\|_{\dbH_0^1}^2+\|\si\|^2_{L^2_{\dbF}(\O;L^2(0,T;\dbH_0^1))}+\|Y_h\|^2_{L^2_{\dbF}(\O;L^2(0,T;\dbH_0^1))}+\|\wt X\|^2_{L^2(0,T;\dbL^2)}\rt)\\
&\quad \leq  C\t\, .
\eeq
Utilizing Theorem \ref{w0911t2} for {\bf BSPDE}$_h$ \eqref{bsde} with $\Pi_hf=\cS_{h\t}(\Pi_\t U^*_h)-\Pi_h^1\wt X$ 
and \eqref{w1212e2}, we can find that
\beq
 &\max_{0\leq n\leq N}\me \bigl[\|Y_h(t_n;\cS_{h\t}(\Pi_\t U^*_h))-Y_h^n(\cS_{h\t}(\Pi_\t U^*_h))\|_{{\mathbb L}^2}^2\bigr] \\
&\quad \leq C  {\mathbb E}\Bigl[
 \sum_{n=0}^{N-1}  \int_{t_n}^{t_{n+1}} \bigl\Vert \nabla \bigl[Y_h(s;\cS_{h\t}(\Pi_\t U^*_h))-Y_h(t_n;\cS_{h\t}(\Pi_\t U^*_h))
\bigr] \bigr\Vert^2_{{\mathbb L}^2} + \Vert \wt X(s) - \wt X(t_n)\Vert^2_{{\mathbb L}^2}\, {\rm d}s \Bigr]\\
&\quad \leq C\t\lt[ \max_{0\leq n\leq N} \me \bigl\Vert \nabla X_{h\t}(t_n;\cS_{h\t}(\Pi_\t U^*_h))
 \bigr\Vert^2_{{\mathbb L}^2}+\me\int_0^T\bigl\Vert \nabla \Pi_h^1\wt X(t)
 \bigr\Vert^2_{{\mathbb L}^2}\rd t\rt]\\
 &\qquad +C\me\Bigl[
 \sum_{n=0}^{N-1}  \int_{t_n}^{t_{n+1}}  \Vert \wt X(s) - \wt X(t_n)\Vert^2_{{\mathbb L}^2}\, {\rm d}s \Bigr]\\
&\quad \leq  C\t\, .
\eeq
Here, we apply the representation of $X_{h\t}$ \eqref{rep1}, the fact $\wt X\in L^2(0,T;\dbH_0^1)$,
and condition \eqref{w1212e2}.

Now we insert above estimates into \eqref{w1212e1} to obtain assertion {\rm (i)}.

\ms

{\no{\bf Step 2.}
For all $k=0,1,\cds,N$, we define $e_{X}^k=X^*_h(t_k)- X^*_{h\t}(t_k)$. Subtracting \eqref{w1003e7} from \eqref{w1013e1a} leads to
\begin{eqnarray*}
e_{X}^{k+1} - e_X^{k} &=&  \tau \Delta_h e_X^{k+1} +
\tau \Pi_h^1[U_h^*(t_k) - U_{h\tau}^*(t_k)] + \int_{t_k}^{t_{k+1}} \Pi_h^1 [\sigma(s) - \sigma(t_k)]\, {\rm d}W(s)\\
&&
+ \int_{t_k}^{t_{k+1}} \lt(\Delta_h [X^*_h(s) - X^*_h(t_{k+1})] + \Pi_h^1[U^*_h(s) - U^*_h(t_k)]\rt)
\, {\rm d}s\, .
\end{eqnarray*}
Testing with $e_{X}^{k+1}$, and using binomial formula, Poincar\'{e}'s inequality, independence, and absorption lead to
\begin{eqnarray*}
&&\frac{1}{2} {\mathbb E}\bigl[ \Vert e_{X}^{k+1}\Vert^2_{{\mathbb L}^2} - \Vert e_{X}^{k}\Vert^2_{{\mathbb L}^2} + \frac{1}{2} \Vert e_{X}^{k+1} - e_{X}^{k}\Vert^2_{{\mathbb L}^2}\bigr] + \frac{\tau}{2} {\mathbb E}\bigl[ \Vert \nabla e_{X}^{k+1} \Vert^2_{{\mathbb L}^2}\bigr]\\
&&\quad \leq C\tau {\mathbb E}\bigl[ \Vert U_h^*(t_k) - U_{h\tau}^*(t_k)\Vert^2_{{\mathbb L}^2}\bigr] + C {\mathbb E}\Bigl[ \Bigl\Vert \int_{t_k}^{t_{k+1}} \Pi_h^1 [\sigma(s) - \sigma(t_k)]\, {\rm d}W(s)\Bigr\Vert_{{\mathbb L}^2}^2\Bigr] \\
&&\qquad + C {\mathbb E}\Bigl[ \int_{t_k}^{t_{k+1}} \Vert \nabla \bigl[ X^*_h(s) - X^*_h(t_{k+1})\bigr] \Vert^2_{{\mathbb L}^2}\, {\rm d}s\Bigr] + C {\mathbb E}\Bigl[ \int_{t_k}^{t_{k+1}}  \Vert U^*_h(s) - U^*_h(t_k)\Vert^2_{{\mathbb L}^2}\, {\rm d}s\Bigr].
\end{eqnarray*}
By taking the sum over all $0\leq k\leq n$ and $0\leq k\leq N-1$, and noting that $e_X^0=0$, we find that
\beq
&\max_{0\leq n\leq N}\me \bigl[\|e_X^n\|_{\dbL^2}^2 \bigr]+\sum_{n=1}^N\t\me \bigl[\|\nb e_X^n\|_{\dbL^2}^2 \bigr]\\
&\quad \leq C \sum_{k=0}^{N-1}\me\Bigl[\t \|U^*_h(t_k)-U^*_{h\t}(t_k)\|_{\dbL^2}^2
+\int_{t_k}^{t_{k+1}} \Big( \|\si(s)-\si(t_k)\|_{\dbL^2}^2 +\\
&\qquad +\|\nb \lt[X^*_h(s) - X^*_h(t_{k+1})\rt]\|_{\dbL^2}^2
+\|U^*_h(s)-U^*_h(t_k)\|_{\dbL^2}^2\Big)  \rd s \Bigr]\, .
\eeq
By \eqref{w1212e1}, the first term on the right-hand side is bounded by $C \tau$. 
We use It\^{o} isometry for the second term, and H\"older regularity in time of $\sigma$ to bound it equally. Adopting the method in \eqref{w1206e7}, we can bound the third term by
$
C\t \lt( \|\D_h X^*_h(0)\|_{\dbL^2}^2 +\|\nb\Pi_h^1U^*_h(t)\|_{L^2_\dbF(\O;L^2(0,T;\dbL^2))}^2+\|\si(t)\|_{L^2_\dbF(\O;L^2(0,T;\dbH^2))}^2\rt).
$
 We use \eqref{w1206e7} to bound the last term by $C\tau$.
That is assertion (ii).

\ms

\no{\bf Step 3.} 
Firstly, we introduce an auxiliary BSDE
\bel{w1212e5}
\lt\{
\bal
&Y_{\t}(t_{n+1})-Y_{\t}(t_n)= \tau \lt[-\D_hY_{\t}(t_{n})+\lt(X^*_{h\t}(t_{n+1})-\Pi_h^1\wt X(t_{n+1})\rt) \rt]\\
&\qq\qq\qq\qq\qq+\int_{t_n}^{t_{n+1}}Z_{\t}(t)\, {\rm d}W(t) \q n=0,1,\cds,N-1\, ,\\
& Y_{\t}(T)=-\a\lt(X^*_{h\t}(T)-\Pi_h^1\wt X(T)\rt).
\eal
\rt.
\ee
It is easy to see that $Y_\t=Y_{h\t}$. Define $e_Y^n=Y_{h}(t_n)-Y_\t(t_n)$, $n=0,1,\cds,N$. 
With the same argument as that in the proof of Theorem \ref{w0911t2}, we can deduce
\beq
&\max_{0\leq n\leq N}\me \bigl[\|e_Y^n\|_{\dbL^2}^2\bigr]+\sum_{n=1}^N\t\me  \bigl[\lt\|\nb e_Y^n\rt\|_{{\mathbb L}^2}^2 \bigr]\\
\leq& C\sum_{k=0}^{N-1}  
\int_{t_k}^{t_{k+1}}{\mathbb E} \Big[ \Vert \nb \bigl[Y_h(s) - Y_h(t_{k})\bigr]\|_{\dbL^2}^2+\|X^*_h(s) - X^*_{h\t}(t_{k+1})\|_{\dbL^2}^2
+\|\wt X(s)-\wt X(t_{k+1})\|_{\dbL^2}^2\Big] \rd s \, .
\eeq
Applying Lemma \ref{w0911l2} (ii), the first integral term is bounded by 
\beq
C\t\lt\{ \|\D_h X_h(0) \|^2_{\dbL^2}+\|\D_h \Pi_h^1 \wt X(T) \|^2_{\dbL^2}+\int_0^T\me\bigl[  \|\nb X^*_h(t)\|_{\dbL^2}^2+ \|\nb\Pi_h^1 \wt X^*_h(t)\|_{\dbL^2}^2+ \| \Pi_h^1U^*_h(t)\|_{\dbL^2}^2 \bigr] \rd t\rt\}\, .
\eeq
It remains to estimate the second integral term,  which is bounded by
\beq
&C\sum_{k=0}^{N-1}  
\int_{t_k}^{t_{k+1}}\me\Big[ \|X^*_h(s) - X^*_{h}(t_{k+1})\|_{\dbL^2}^2+\|X^*_h(t_{k+1}) - X^*_{h\t}(t_{k+1})\|_{\dbL^2}^2
\Big] \rd s  \\
&\quad \leq C\t\lt\{ \|\nb X_h(0) \|^2_{\dbL^2} +\int_0^T\me \bigl[  \| \Pi_h^1U^*_h(t)\|_{\dbL^2}^2 +\|\nb \Pi_h^1\si(t)\|_{\dbL^2}^2 \bigr]\rd t\rt\}\\
&\qquad +C \max_{0 \leq k \leq N}  {\mathbb E}\bigl[\|X_h^*(t_k)-X_{h\t}^*(t_k)\|_{\dbL^2}^2\bigr]\\
&\quad \leq C\t\, .
\eeq
Assertion (iii) now follows from the above three statements and conditions on $X_0,\si,\wt X$. }
\end{proof}

\section{The gradient descent method to solve {\bf SLQ}$_{h\t}$}\label{numopt}

By Theorem \ref{MP}, solving minimization problem {\bf SLQ$_{h\t}$} is 
equivalent to solving the system of coupled forward-backward difference equations 
\eqref{w1212e3} and \eqref{w1003e12}. We may exploit the variational
character of problem {\bf SLQ$_{h\t}$} to construct a gradient descent method
 {\bf SLQ$_{h\t}^{\rm grad}$}
where approximate iterates of the optimal control $U^*_{h\t}$ in the 
Hilbert space $\dbU_{h\t}$ are obtained; see also \cite{Nesterov04,Kabanikhin12}.
%
%

\ms

\begin{algorithm}\label{alg1}
{\bf ({\bf SLQ$_{h\t}^{\rm grad}$})}
Let $U_{h\t}^{(0)}\in \dbU_{h\t}$, and fix $\kappa > 0$. For any $\ell \in {\mathbb N}_0$, update $U_{h\t}^{(\ell)} \in {\mathbb U}_{h\tau}$ as follows:
\begin{enumerate}
\item[1.] Compute $X_{h\t}^{(\ell)}\in \dbX_{h\t}$ by 
\begin{equation*}
\left\{
\bal
& [\mathds{1} - \tau \Delta_h]X^{(\ell)}_{h\t}(t_{n+1})= X^{(\ell)}_{h\t}(t_n)+ \tau \Pi_h^1U^{(\ell)}_{h\t}(t_n) +\Pi_h^1\si(t_n) \D_{n+1}W \q n=0,1,\cds,N-1\, ,\\
& X_{h\t}^{(\ell)}(0)=\Pi_h^1X_0\, .
\eal
\right.
\end{equation*}
\item[2.] Use $X_{h\t}^{(\ell)}\in \dbX_{h\t}$ to compute $Y_{h\t}^{(\ell)}\in \dbX_{h\t}$ via
\begin{equation*}
\lt\{
\bal
&[\mathds{1} - \tau \Delta_h]Y^{(\ell)}_{h\t}(t_n) = {\mathbb E}\lt[ Y^{(\ell)}_{h\t}(t_{n+1})- {\tau} \bigl(X^{(\ell)}_{h\t}(t_{n+1})-\Pi_h^1\wt X(t_{n+1})\bigr)\bigl\vert
{\mathcal F}_{t_n}\rt] \\
&\qq\qq\qq\qq\qq\qq\q n=0,1,\cds,N-1\, ,\\
&Y_{h\t}^{(\ell)}(T)=-\a \bigl(X_{h\t}^{(\ell)}(T)-\Pi_h^1\wt X(T) \bigr)\, .
\eal
\rt.
\end{equation*}

\item[3.] Compute the update $U_{h\t}^{(\ell+1)} \in {\mathbb U}_{h\tau}$ via
\beq
U^{(\ell+1)}_{h\t}=U^{(\ell)}_{h\t}-\frac 1 {\kappa} \lt(U^{(\ell)}_{h\t} -\Pi_h^0Y_{h\t}^{(\ell)} \rt) \, .
\eeq
\end{enumerate}
\end{algorithm}
Note that Steps 1 and 2 are now decoupled: the first step requires to solve a space-time discretization (\ref{esti-time1}) of  {\bf SPDE} (\ref{forw1}), while the second requires to solve the space-time discretization (\ref{w0911e10})$_1$ of the {\bf BSPDE} (\ref{w1205e3})$_2$.
We refer to related works on how to approximate conditional expectations
 \cite{Bouchard-Touzi04, Gobet-Lemor-Warin05, Bender-Denk07, Wang-Zhang11}; 
a similar method to {\bf SLQ$_{h\t}^{\rm grad}$} to solve problem {\bf SLQ$_{h\t}$} 
has been proposed in \cite{Dunst-Prohl16}.

We want to show convergence of {\bf SLQ$_{h\t}^{\rm grad}$} for $\kappa >0$ sufficiently large and $\ell \uparrow \infty$. For this purpose, we recall the notations ${\mathcal S}_{h\tau}, {\mathcal T}_{h\tau}, \widehat{\mathcal J}_{h\tau}$ introduced in Section \ref{rate-2}. For this purpose, we first recall Lipschitz continuity of 
$D\widehat{\mathcal J}_{h\tau}$:
%
%
since 
$$D^2\h\cJ_{h\t}(U_{h\t})= \lt(\mathds{1} +L^*L+\a\h L^*\h L\rt)U_{h\t}\,,$$
where operators $L,\,\h L$ are defined in \eqref{rep1}, we find $K:=\|\mathds{1} +L^*L+\a\h L^*\h L\|_{\cL(\dbU_{h\t};\dbU_{h\t})}$, such that
\beq
\|D\h\cJ_{h\t}\lt(U^1_{h\t}\rt)-D\h\cJ_{h\t}\lt(U^2_{h\t}\rt)\|_{\dbU_{h\t}}\leq K \lt\|U^1_{h\t}-U^2_{h\t}\rt\|_{\dbU_{h\t}}\, .
\eeq
Indeed,
noting that $\|(\mathds{1}-\t\D_h)^{-1}\|_{\cL(\dbV_h^1;\dbV_h^1)}\leq 1$,
we conclude
\begin{eqnarray*}
\|LU_{h\t}\|_{\dbX_{h\t}}^2&=&\sum_{n=1}^N\tau \me \bigl[\|LU_{h\t}(t_n)\|_{\dbL^2}^2\bigr]
=\sum_{n=1}^N \tau \me\Bigl[ \| \t \sum_{j=0}^{n-1} \lt[(\mathds{1}-\t\D_h)^{-1}\rt]^{n-j}\Pi_h^1U_{h\t}(t_j) \|_{\dbL^2}^2 \Bigr] \\
&\leq&  T^2\|U_{h\t}\|_{\dbU_{h\t}}^2\, ,
\end{eqnarray*}
and
\beq
\|\h LU_{h\t}\|_{L^2_{\mf_T}(\O;\dbL^2)}^2=\me \Bigl[\big\| \t \sum_{j=0}^{N-1} \lt[(\mathds{1}-\t\D_h)^{-1}\rt]^{N-j}\Pi_h^1U_{h\t}(t_j)\big\|_{\dbL^2}^2 \Bigr]\leq  T\|U_{h\t}\|_{\dbU_{h\t}}^2\, .
\eeq
Hence
\beq
K=\|\mathds{1} +L^*L+\a\h L^*\h L\|_{\cL(\dbU_{h\t};\dbU_{h\t})}\leq 1+\a T+T^2.
\eeq
Since {\bf SLQ$_{h\t}^{\rm grad}$} is the gradient descent method for {\bf SLQ$_{h\t}$}, we have the following result.
%
%
%
%
%
\bt{gradient-rate} Suppose that $\kappa \geq K$.
Let $\ds \{U^{(\ell)}_{h\t}\}_{\ell \in {\mathbb N}_0} \subset {\mathbb U}_{h\tau}$ be  generated by {\bf SLQ$_{h\t}^{\rm grad}$}, and $U^*_{h\t}$ solve {\bf SLQ$_{h\t}$}. Then
\beq
\lt\{
\bal
&\h\cJ_{h\t}(U^{(\ell)}_{h\t})-\h\cJ_{h\t}(U^*_{h\t})\leq \frac{2 \kappa \Vert U^{(0)}_{h\t}-U^*_{h\t} \Vert_{\dbU_{h\t}}^2}{\ell},\\
& \| U^{(\ell)}_{h\t}-U^*_{h\t} \|_{\dbU_{h\t}}^2\leq \lt(1-\frac 1 {\kappa}\rt)^{\ell}\| U^{(0)}_{h\t}-U^*_{h\t} \|_{\dbU_{h\t}}^2  \qquad  \ell = 1, 2, \cdots .
\eal
\rt.
\eeq
\et

\begin{proof}
We know that $D\h\cJ_{h\t}$ is Lipschitz continuous with constant $K>0$. Also, $\h\cJ_{h\t}$ is strongly convex. 
Hence, the gradient descent method in abstract form is the following iteration (see Algorithm \ref{alg1}, Step 3.)
\bel{w0115e2}
U^{(\ell+1)}_{h\t}=U^{(\ell)}_{h\t}-\frac 1 {\kappa} D\h\cJ_{h\t}(U^{(\ell)}_{h\t}),\q \ell=0,1,2\cds\, .
\ee
By the proof of Theorem \ref{MP}, we have obtained the following facts:
\beq
\lt\{
\bal
&D\h\cJ_{h\t}(U^{(\ell)}_{h\t})=U^{(\ell)}_{h\t}-\Pi_h^0\cT_{h\t}\bigl(\cS_{h\t} (U^{(\ell)}_{h\t})\bigr)\, ,\\
& \Pi_h^0 \cT_{h\t}\bigl(\cS_{h\t}(U^{(\ell)}_{h\t})\bigr)=-L^*\lt(\G {\Pi_h^1 X_0} +L U^{(\ell)}_{h\t} +f -{\wt X}\rt)\\
 &\qq\qq\qq\qq\qq-\a\h L^*\lt(\h\G {\Pi_h^1 X_0} +\h L U^{(\ell)}_{h\t} +\h f-{\wt X}(T)\rt),
\eal
\rt.
\eeq
where $L,\h L,\G,\h\G,\h f$ are defined in \eqref{rep1} and \eqref{w1003e14}. Via \eqref{w1003e01}, we have 
that $ \Pi_h^0 \cT_{h\t}\bigl(\cS_{h\t}(U^{(\ell)}_{h\t})\bigr)$ is just $Y_{h\t}^{(\ell)}$, the solution of Step 2 in Algorithm \ref{alg1}.
Therefore, \eqref{w0115e2} is consistent with the gradient descent method {\bf SLQ$_{h\t}^{\rm grad}$}. 
The desired error estimates now follow by standard estimates for the gradient descent method (see, e.g. \cite[Theorem 1.2.4]{Nesterov04}).
\end{proof}

\section*{Acknowledgement}

This work was carried out when Yanqing Wang visited the University of T\"ubingen in 2019--2020, supported by a 
DAAD-K.C.~Wong Postdoctoral Fellowship.


\end{document}